\theoremstyle{plain}
\newtheorem{thm}{Theorem}[section] 
\newtheorem{cor}[thm]{Corollary}
\newtheorem{prop}[thm]{Proposition}
\newtheorem{conj}[thm]{Conjecture}
\newtheorem{lem}[thm]{Lemma}
\theoremstyle{definition} 
\newtheorem{defn}[thm]{Definition}
\newtheorem{eg}[thm]{Example} 
\theoremstyle{remark}
\newtheorem{rem}[thm]{Remark}
\newtheorem{ques}[thm]{Question}
\newtheorem*{claim}{Claim}
\newtheorem*{acknowledgement}{Acknowledgments}
\newcommand{\sO}{\mathcal{O}}
\newcommand{\Z}{\mathbb{Z}}
\newcommand{\N}{\mathbb{N}} 
\newcommand{\Q}{\mathbb{Q}} 
\newcommand{\C}{\mathbb{C}}
\newcommand{\pP}{\mathbb{P}}
\newcommand{\ba}{\mathfrak{a}}
\newcommand{\fp}{\mathfrak{p}}
\newcommand{\m}{\mathfrak{m}}
\newcommand{\depth}{\mathop{\mathrm{depth}}\nolimits}
\newcommand{\Spec}{\mathop{\mathrm{Spec}}\nolimits}
\newcommand{\Proj}{\mathop{\mathrm{Proj}}\nolimits}
\newfont{\bg}{cmr17 scaled\magstep5}
\newcommand{\cd}{\mathrm{cd}}
\newcommand{\Pic}{\mathrm{Pic}}
\newcommand{\flo}[2]{\Big\lfloor \frac{#1}{#2} \Big\rfloor}
\DeclareMathOperator{\Cl}{Cl}
\DeclareMathOperator{\height}{ht}
\DeclareMathOperator{\pd}{pd}
\DeclareMathOperator{\reg}{reg}
\title{{On the relationship between depth\\ and cohomological dimension}}
\author{Hailong Dao}
\address{Department of Mathematics, University of Kansas, Lawrence, KS 66045-7523, USA}
\email{hdao@ku.edu}
\author{Shunsuke Takagi}
\address{Graduate School of Mathematical Sciences, University of Tokyo, 3-8-1 Komaba, Meguro-ku, Tokyo 153-8914, Japan}
\email{stakagi@ms.u-tokyo.ac.jp}
\keywords{Local cohomology, Cohomological dimension, Local Picard groups}
\subjclass[2010]{13D45, 14B15, 32C36}
\dedicatory{{Dedicated to Professor~Yuji~Yoshino on the~occasion of his~sixtieth~birthday.}}
\begin{document}

\begin{abstract}
Let $(S, \m)$ be an $n$-dimensional regular local ring essentially of finite type over a field and let $\ba$ be an ideal of $S$. 
We prove that if $\depth S/\ba \ge 3$, then the cohomological dimension $\mathrm{cd}(S, \ba)$ of $\ba$ is less than or equal to $n-3$. 
This settles a conjecture of Varbaro for such an $S$. 
We also show, under the assumption that $S$ has an algebraically closed residue field of characteristic zero, that if $\depth S/\ba \ge 4$, then $\mathrm{cd}(S, \ba) \le n-4$ if and only if the local Picard group of the completion $\widehat{S/\ba}$ is torsion. 
We give a number of applications, including a vanishing result on Lyubeznik's numbers, and sharp bounds on cohomological dimension of ideals whose quotients satisfy good depth conditions such as Serre's conditions $(S_i)$. 
\end{abstract}

\maketitle
\markboth{H.~DAO and S.~TAKAGI}{Depth and cohomological dimension}


\section{Introduction}
{
Local cohomology was introduced by Grothendieck in the early 1960s and has become since a fundamental tool in commutative algebra and algebraic geometry. 
It is important to know when they vanish.
Let $S$ be a Noetherian local ring and $\ba$ be an ideal of $S$. 
Then the \textit{cohomological dimension} $\mathrm{cd}(S, \ba)$ of $\ba$ in $S$ is defined by 
\[
\mathrm{cd}(S, \ba)=\sup\{i \in \Z_{\ge 0} \mid H^i_{\ba}(M) \ne 0 \textup{ for some $S$-module $M$}\}.
\]
This invariant has been studied by many authors including Hartshorne \cite{Ha2}, Ogus \cite{Og}, Peskine-Szpiro \cite{PS}, Huneke-Lyubeznik \cite{HL}, Lyubeznik \cite{Lyu}, Varbaro \cite{Va}, etc.  
It follows from a classical vanishing theorem of Grothendieck that $\mathrm{cd}(S, \ba)$ is less than or equal to the dimension of $S$. 
A natural question to ask is under what conditions one can obtain a better upper bound on $\mathrm{cd}(S, \ba)$.  
In this paper, we assume that $S$ is an $n$-dimensional regular local ring containing a field and investigate the relationship between $\mathrm{cd}(S, \ba)$ and the depth of $S/\ba$. 

The first result of this type is that if $\depth S/\ba \ge 1$, then $\mathrm{cd}(S, \ba) \le n-1$, which is an immediate consequence of the Hartshorne-Lichtenbaum vanishing theorem \cite{Ha2}. 
It also follows from results of Ogus \cite{Og} and Peskine-Szpiro \cite{PS} (see also \cite{HL}) that 
if $\depth S/\ba \ge 2$, then $\mathrm{cd}(S, \ba) \le n-2$. 
In fact, Peskine-Szpiro proved a more general result in positive characteristic: if $S$ is a regular local ring of characteristic $p>0$ and if $\depth S/\ba \ge i$, then $\mathrm{cd}(S, \ba) \le n-i$. 
One might expect that an analogous result holds in equal characteristic zero, but there exists a class of examples where $S$ is a localization of a polynomial ring over a field of characteristic zero, $\depth S/\ba \ge 4$ and $\mathrm{cd}(S, \ba)=n-3$ (see Example \ref{segre}). 
On the other hand, Varbaro \cite{Va} proved that if $\mathfrak{b}$ is a homogeneous ideal of a polynomial ring $T=k[x_1, \dots, x_n]$ over a field $k$ and if $\depth T/\mathfrak{b} \ge 3$, then $\mathrm{cd}(T, \mathfrak{b}) \le n-3$.
He conjectured that a similar statement holds in a local setting. 

\begin{conj}\label{varbaro conj}
Let $S$ be an $n$-dimensional regular local ring containing a field and let $\ba$ be an ideal of $S$. 
If $\depth S/\ba \ge 3$, then $\mathrm{cd}(S, \ba) \le n-3$.
\end{conj}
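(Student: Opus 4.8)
The plan is to prove the equivalent vanishing statement $H^{n-2}_{\ba}(S)=0$. Indeed $\depth S/\ba\ge 3$ forces $\depth S/\ba\ge 2$, so by the quoted results of Ogus and Peskine--Szpiro we already have $\cd(S,\ba)\le n-2$, i.e. $H^{n}_{\ba}(S)=H^{n-1}_{\ba}(S)=0$; only the degree $n-2$ piece remains. First I would reduce to the case where $S$ is complete with algebraically closed residue field: completion is faithfully flat with $H^i_{\ba}(S)\otimes_S\widehat{S}=H^i_{\ba\widehat{S}}(\widehat{S})$ and preserves $\depth S/\ba$, so the vanishing may be checked after completion, and a standard faithfully flat extension enlarges the residue field to an algebraic closure while keeping $S$ regular and $\depth S/\ba$ unchanged.

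Next I would split on the characteristic. In characteristic $p>0$ there is nothing new to do: Peskine--Szpiro's argument gives $\cd(S,\ba)\le n-3$ directly. By local duality (for $S$ regular, $\mathrm{Ext}^{n-i}_S(S/\ba,S)\cong H^i_{\m}(S/\ba)^{\vee}$) the hypothesis is equivalent to $\mathrm{Ext}^{n-2}_S(S/\ba,S)=0$; since the Frobenius is flat over the regular ring $S$ one has $\mathrm{Ext}^{n-2}_S(S/\ba^{[p^e]},S)\cong F^e_{*}\,\mathrm{Ext}^{n-2}_S(S/\ba,S)$, and as the Frobenius powers are cofinal with ordinary powers this vanishing propagates to the limit $H^{n-2}_{\ba}(S)=\varinjlim_e\mathrm{Ext}^{n-2}_S(S/\ba^{[p^e]},S)=0$. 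The whole difficulty is therefore concentrated in equal characteristic zero, where this propagation is unavailable; Example \ref{segre} shows the analogous implication already fails one step further, at $\depth\ge 4$.

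For the characteristic zero case I would pass through Ogus's theory of local cohomological dimension. Writing $R=\widehat{S/\ba}$ and $U=\Spec R\setminus\{\m\}$ for the punctured spectrum, the plan is to use Ogus's characterization of $\cd(S,\ba)\le n-c$ through the vanishing of the de Rham (equivalently, algebraic/formal) cohomology of the punctured formal neighborhood of $U$ in degrees $\le c-2$: for $c=2$ this recovers Hartshorne's connectedness of $U$ (already available from $\cd\le n-2$), so for $c=3$ the only new target is the vanishing of the first such de Rham cohomology, $H^{1}_{\mathrm{dR}}(U)=0$. On the other side, $\depth R\ge 3$ is equivalent to $H^0_{\m}(R)=H^1_{\m}(R)=H^2_{\m}(R)=0$, and through $H^{i}(U,\sO_U)\cong H^{i+1}_{\m}(R)$ for $i\ge 1$ this says exactly that the coherent cohomology $H^1(U,\sO_U)$ vanishes. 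It thus remains to promote coherent vanishing to the topological/de Rham vanishing that governs $\cd$.

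This last comparison is the main obstacle. My plan is to analyze the Hodge-type filtration on $H^1_{\mathrm{dR}}(U)$, whose associated graded is built from $H^1(U,\sO_U)$ and $H^0(U,\Omega^1_U)$: on the weight-one part the symmetry $h^{1,0}=h^{0,1}$ makes $H^1(U,\sO_U)=0$ kill the whole contribution, and I would show the remaining weights cannot survive once $U$ is connected with $H^1(U,\sO_U)=0$. The conceptual point is that the only possible obstruction here is a non-torsion local Picard class, which genuinely occurs at the next level (obstructing $\cd\le n-4$, as the Segre cone of Example \ref{segre} shows with $\Pic=\Z^2$) but has no degree-one analogue. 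Concretely I expect to make this step algebraic and independent of the residue field via Grothendieck's local Lefschetz conditions, reducing it to a full-faithfulness statement for formal completion that the depth hypothesis guarantees. Assembling the characteristic $p$ and characteristic zero cases then yields $H^{n-2}_{\ba}(S)=0$, hence the conjecture for $S$ essentially of finite type over a field.
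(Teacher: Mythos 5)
Your skeleton (reduce to $H^{n-2}_{\ba}(S)=0$, dispose of characteristic $p$ via Peskine--Szpiro, and in characteristic zero translate the problem via Ogus into a degree-one topological/de Rham vanishing on the punctured neighborhood) matches the paper's strategy for Corollary \ref{vanishing} (via Theorem \ref{3rd vanishing in char 0} and Proposition \ref{depth3}). But the crux of the whole matter --- promoting the coherent vanishing $H^1(U\setminus\{x\},\sO_U)=0$ supplied by $\depth \ge 3$ to the topological/de Rham vanishing that controls $\cd$ --- is exactly where your proposal has no valid mechanism. You invoke Hodge symmetry $h^{1,0}=h^{0,1}$ on the punctured spectrum (equivalently, on the link of the singularity). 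No such symmetry holds there: $U\setminus\{x\}$ is neither proper nor in general smooth, the Hodge--de Rham spectral sequence need not degenerate, and the two sides can differ wildly. For the affine cone $R$ over an elliptic curve, $H^1(U\setminus\{x\},\sO_U)\cong H^2_{\m}(R)$ is infinite-dimensional while $H^1$ of the link has rank $2$; that example has depth $2$, so it does not contradict the conjecture, but it destroys Hodge symmetry as a tool in this setting. Even granting a Hodge-type filtration, killing the $H^1(\sO)$-piece leaves the $H^0(\Omega^1)$-piece, for which you offer only ``I would show the remaining weights cannot survive'' and a vague appeal to Grothendieck's Lefschetz conditions --- not an argument. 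What actually makes this step work in the paper is the exponential sequence $0\to \Z\to \sO\to \sO^{\times}\to 1$ on a contractible Stein neighborhood $U$ of $x$: depth $\ge 2$ forces units on $U\setminus\{x\}$ to extend to units on $U$ (hence to admit logarithms), which yields an injection $H^1(U\setminus\{x\},\Z)\hookrightarrow H^1(U\setminus\{x\},\sO_U)$, and depth $\ge 3$ kills the target, so $H^{2}(X_{\rm an},(X\setminus\{x\})_{\rm an};\Z)=0$ and therefore $H^{n-2}_{\ba}(S)=0$. This idea is absent from your proposal, and it is the essential one; your heuristic that ``the only possible obstruction is a non-torsion local Picard class'' gestures at the paper's Picard-group criterion but does not substitute for the proof.

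There is also a strategic error at the start: reducing to the case where $S$ is \emph{complete} goes in the wrong direction. It is true that the vanishing can be checked after completion, but every characteristic-zero tool you then want --- Stein neighborhoods, finiteness of the singular cohomology of the pair $(X_{\rm an},(X\setminus\{x\})_{\rm an})$, and the Ogus/Hartshorne comparison between local cohomological dimension, local de Rham cohomology, and singular cohomology --- requires a model of finite type over $\C$. An arbitrary complete local ring admits no such model, and indeed the paper proves the conjecture only for $S$ essentially of finite type over a field precisely because the complete case remains out of reach. The paper's reduction runs the opposite way: Dutta's theorem \cite{Du} descends $(S,\ba)$ to a localization of a polynomial ring, and the Lefschetz-principle Lemma \ref{Picard Lefschetz} then brings the residue field down to $\C$. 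Finally, a smaller omission: to apply Ogus's criterion in the form you use it, one must first know that $H^{n-2}_{\ba}(S)$ is supported only at $\m$; the paper proves this as a separate Claim (localization plus local duality, using the depth hypothesis), and your proposal never addresses it.
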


Motivated by the above conjecture, we consider a necessary and sufficient condition for $\mathrm{cd}(S, \ba)$ to be less than $n-2$ when $S$ is a regular local ring essentially of finite type over a field of characteristic zero. 
The following is our main result.  

\begin{thm}[cf.~Theorem \ref{3rd vanishing in char 0}]\label{main thm}
Let $(S, \m, k)$ be an $n$-dimensional regular local ring essentially of finite type over a field of characteristic zero with residue field $k$ algebraically closed. 
Let $\ba$ be an ideal of $S$ and set $R=S/\ba$. 
Suppose that  $\depth R \ge 2$ and $H^2_{\m}(R)$ is a $k$-vector space. 
Then $\mathrm{cd}(S, \ba) \le n-3$ if and only if the torsion subgroup of $\mathrm{Pic}(\Spec \widehat{R} \setminus \{\m \widehat{R} \})$ is finitely generated, where $\widehat{R}$ is the $\m R$-adic completion of $R$. 
\end{thm}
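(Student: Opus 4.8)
The plan is to prove the equivalence by translating both sides into a single condition on an integral lattice inside $H^1$ of the punctured spectrum, and then matching the two translations. First I would reduce to the complete case: completing changes neither $\depth R$, nor the modules $H^i_\m(R)$, nor $\mathrm{cd}(S,\ba)$, nor the group $\Pic(\Spec\widehat R\setminus\{\m\widehat R\})$, so we may assume $S=\widehat S$ and $R=\widehat R$. Since $\depth R\ge 2$, the Ogus--Peskine--Szpiro bound recalled in the introduction gives $\mathrm{cd}(S,\ba)\le n-2$, i.e. $H^{n-1}_\ba(S)=H^{n}_\ba(S)=0$. Hence the left-hand side is equivalent to the single vanishing $H^{n-2}_\ba(S)=0$, and the whole problem becomes: show $H^{n-2}_\ba(S)=0$ if and only if the torsion of $\Pic(U')$ is finitely generated, where $U'=\Spec \widehat R\setminus\{\m\widehat R\}$.

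Next I would record the coherent cohomology of $U'$. The local cohomology exact sequence, together with $\depth R\ge 2$ (so $H^0_\m(R)=H^1_\m(R)=0$) and the fact that $\Spec\widehat R$ is affine, gives $H^0(U',\sO_{U'})\cong\widehat R$ and $H^i(U',\sO_{U'})\cong H^{i+1}_\m(\widehat R)$ for $i\ge 1$. In particular $H^1(U',\sO_{U'})\cong H^2_\m(\widehat R)\cong H^2_\m(R)$, which by hypothesis is a $k$-vector space; being a vector space over the characteristic-zero field $k$, it is in particular a $\Q$-vector space, hence divisible and torsion-free. This identification is the reason the vector-space hypothesis on $H^2_\m(R)$ enters, and it is what will force the correct invariant to be the \emph{torsion} of $\Pic(U')$ rather than $\Pic(U')$ itself.

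The torsion analysis comes from the exponential sequence on $U'$. After passing to $\C$ by the Lefschetz principle and comparing algebraic with analytic/formal coherent cohomology (available since $S$ is essentially of finite type over an algebraically closed field of characteristic zero), the sequence $0\to\Z\to\sO_{U'}\to\sO_{U'}^{*}\to 0$ yields
\[
H^1(U',\Z)\xrightarrow{\ \iota\ } H^1(U',\sO_{U'})\to \Pic(U')\xrightarrow{\ c_1\ } H^2(U',\Z)\to H^2(U',\sO_{U'}),
\]
so $\Pic(U')$ is an extension $0\to\Pic^0(U')\to\Pic(U')\to \mathrm{NS}(U')\to 0$ with $\mathrm{NS}(U')=\Image(c_1)$ finitely generated (as $H^2(U',\Z)$ is finitely generated) and $\Pic^0(U')\cong H^1(U',\sO_{U'})/\Lambda$, where $\Lambda=\Image(\iota)$. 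Because $\mathrm{NS}(U')$ is finitely generated, $\mathrm{tors}\,\Pic(U')$ is finitely generated if and only if $\mathrm{tors}\,\Pic^0(U')$ is. Now $H^1(U',\sO_{U'})$ is a $\Q$-vector space and $\Lambda$, being the image of the finitely generated group $H^1(U',\Z)$, is free of some rank $r$; hence $\mathrm{tors}\,\Pic^0(U')\cong(\Q\cdot\Lambda)/\Lambda\cong(\Q/\Z)^{r}$, which is finitely generated if and only if $r=0$, i.e. if and only if $\Lambda=0$. Thus, under the stated hypotheses, the right-hand side of the theorem is equivalent to the condition $\Lambda=0$: the integral classes in $H^1(U')$ die in $H^1(U',\sO_{U'})$.

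It remains to match this with the left-hand side, that is, to prove $H^{n-2}_\ba(S)=0\iff\Lambda=0$, and I expect this to be the main obstacle. The tool is the characteristic-zero theory of local cohomological dimension: by Ogus's description of $\mathrm{cd}(S,\ba)$ through the algebraic de Rham cohomology of the formal punctured neighborhood of $V(\ba)$, combined with Grothendieck's algebraic--analytic comparison and the mixed Hodge structure carried by $H^{*}(U')$, the module $H^{n-2}_\ba(S)$ should be governed exactly by the image $\Lambda$ of $H^1(U',\Z)$ in $H^1(U',\sO_{U'})$. Concretely I would identify $H^{n-2}_\ba(S)$ with the Matlis dual of a space whose nonvanishing is detected precisely by $\Lambda\neq 0$, so that $H^{n-2}_\ba(S)=0$ if and only if $\Lambda=0$; here the vector-space hypothesis on $H^2_\m(R)\cong H^1(U',\sO_{U'})$ is used again to guarantee that the relevant comparison degenerates and isolates $\Lambda$ cleanly. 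Combining this with the previous paragraph gives the desired equivalence. The two delicate points are the precise duality identifying $H^{n-2}_\ba(S)$ with an $H^1$-datum on $U'$ and the reading of that datum as the lattice $\Lambda$; everything else is formal.
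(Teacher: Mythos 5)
Your overall skeleton (reduce to $\C$, compute coherent cohomology of the punctured spectrum, run the exponential sequence, and read off "torsion of $\Pic$ finitely generated $\iff$ an integral lattice $\Lambda$ in $H^1(\sO)$ vanishes") parallels the paper's, but the proposal has a fatal gap precisely where you acknowledge "the main obstacle": the equivalence $H^{n-2}_{\ba}(S)=0 \iff \Lambda=0$ is never proved, and the ingredients you gesture at are not sufficient to prove it. The paper's proof of this step requires, first, a Claim that $H^{n-2}_{\ba}(S)$ is \emph{supported only at} $\m$: this is established by local duality, using that $H^2_{\m}(R)$ is a $k$-vector space (hence $\mathrm{Ext}^{n-2}_S(R,S)$ has finite length) together with $\depth R\ge 2$, which forces $\depth R_{\fp}\ge 2$ and then $H^{n-2}_{\ba S_\fp}(S_\fp)=0$ at every height $n-1$ prime by Ogus's second vanishing theorem. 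Only after this support claim can one invoke Ogus's criterion (vanishing of $H^{n-2}_{\ba}(S)$ $\iff$ vanishing of the local de Rham cohomology $H^2_{\{x\},\mathrm{dR}}$ at the closed point), then Hartshorne's de Rham comparison and the universal coefficient theorem to conclude $H^{n-2}_{\ba}(S)=0 \iff H^2(X_{\rm an},(X\setminus\{x\})_{\rm an};\Z)$ is torsion. Any argument like yours that detects vanishing through data at the closed point alone (de Rham cohomology, the topology of the link, a "Matlis dual of an $H^1$-datum") is simply false without the support claim: in the paper's Example \ref{h2}, $\ba=(x,y)\cap(u,v)\subset\C[x,y,u,v,w]_{(x,y,u,v,w)}$, one has $\depth R=2$, the punctual topological invariant $H^2(X_{\rm an},(X\setminus\{x\})_{\rm an};\Z)$ vanishes and $\Pic^{\rm loc}(\widehat R)=\Z$ has finitely generated torsion, yet $\cd(S,\ba)=n-2$ because $H^{n-2}_{\ba}(S)$ is supported at the prime $(x,y,u,v)$, not just at $\m$. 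Your proposal would "prove" the theorem for this example, where it fails.

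Relatedly, your stated use of the hypothesis "$H^2_{\m}(R)$ is a $k$-vector space" is vacuous: in characteristic zero every $R$-module is automatically a $\Q$-vector space, hence divisible and torsion-free as an abelian group, so the torsion-freeness of $H^1(U',\sO_{U'})$ costs nothing and the hypothesis never genuinely enters your argument. Its real role is exactly the support claim above (and the finite-dimensionality needed for the Lefschetz-principle lemma for \'etale-local Picard groups). Two further, more structural problems: (i) your opening reduction "we may assume $S=\widehat S$, $R=\widehat R$" destroys the essentially-of-finite-type hypothesis, yet everything afterwards (Lefschetz principle, analytic comparison, Ogus's theory) requires it; the paper instead uses Dutta's theorem to pass to a localization of a polynomial ring, keeping the finite-type structure, and relates the completion to the henselization via Boutot. (ii) The exponential sequence does not exist on the algebraic scheme $U'=\Spec\widehat R\setminus\{\m\widehat R\}$; one must work with punctured Stein neighborhoods $U\setminus\{x\}$ in the analytification, prove the surjectivity of the restriction map on units (the paper does this using $\depth\ge 2$), take a direct limit, and invoke Boutot's theorem identifying $\varinjlim_U \Pic(U\setminus\{x\})$ with $\Pic(\Spec\widehat R\setminus\{\m\widehat R\})$ — none of which is covered by a generic appeal to "comparing algebraic with analytic/formal coherent cohomology."
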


We note that an analogous statement does not hold in positive characteristic (see Remark \ref{remark in positive char}). 
Also, the assumption on $H^2_{\m}(R)$ cannot be removed (see Example \ref{h2}).
As a corollary of Theorem \ref{main thm}, we give an affirmative answer to Conjecture \ref{varbaro conj} when $S$ is essentially of finite type over a field (Corollary \ref{vanishing}). 

We also study the case where $\depth S/\ba \ge 4$. 
In this case, as we have mentioned above, $\mathrm{cd}(S, \ba)$ is not necessarily less than $n-3$. 
We give a necessary and sufficient condition for $\mathrm{cd}(S, \ba)$ to be less than $n-3$ in a form similar to that of Theorem \ref{main thm}.

\begin{thm}[cf.~Theorem \ref{picard group}]
Let the notation be the same as that used in Theorem \ref{main thm}, and suppose that  $\depth R \ge 4$. 
Then $\mathrm{cd}(S, \ba) \le n-4$ if and only if $\mathrm{Pic}(\Spec \widehat{R} \setminus \{\m \widehat{R} \})$ is torsion. 
\end{thm}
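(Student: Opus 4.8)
The plan is to reduce the statement to the vanishing of the single module $H^{n-3}_\ba(S)$ and then to identify that vanishing with the triviality of the free part of the local Picard group of $V:=\Spec\widehat{R}\setminus\{\m\widehat{R}\}$ by a Hodge-theoretic analysis. First I would pass to the completion: cohomological dimension is insensitive to completion, $\mathrm{cd}(S,\ba)=\mathrm{cd}(\widehat{S},\ba\widehat{S})$, and the Picard group in the statement is already that of $\widehat{R}$, so I may assume $S$ is complete and $R=\widehat{R}$. Since $\depth R\ge 4\ge 3$, Corollary \ref{vanishing} gives $\mathrm{cd}(S,\ba)\le n-3$ unconditionally; hence $\mathrm{cd}(S,\ba)\le n-4$ is equivalent to the single vanishing $H^{n-3}_\ba(S)=0$. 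I also record two consequences of $\depth R\ge 4$ that drive everything below: using $H^i(V,\sO_V)\cong H^{i+1}_\m(R)$ for $i\ge 1$, the depth hypothesis yields $H^1(V,\sO_V)=H^2_\m(R)=0$ and $H^2(V,\sO_V)=H^3_\m(R)=0$.

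Next I would set up the bridge between $H^{n-3}_\ba(S)$ and the cohomology of $V$. The proof of Theorem \ref{main thm} already supplies, for $S$ complete regular and essentially of finite type over a field of characteristic zero with algebraically closed residue field, a dictionary of the shape $H^{n-j}_\ba(S)\leftrightarrow H^{j-1}(V)$: the case $j=1$ is the connectedness statement governing $\depth R\ge 2$, and the case $j=2$ is the one controlling the torsion subgroup of $\mathrm{Pic}(V)$. The key step is to push this dictionary to $j=3$ and to show that $H^{n-3}_\ba(S)=0$ if and only if the second de Rham cohomology $H^2(V)$ (equivalently singular, by Grothendieck's comparison) vanishes. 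Here I would invoke Ogus's criterion \cite{Og} relating the local cohomological dimension of $\ba$ to the vanishing of the de Rham cohomology of the formal punctured neighborhood; the characteristic-zero hypothesis and the algebraically closed residue field are used precisely to run this comparison and to linearize the obstruction.

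The decisive point is to convert ``$H^2(V)=0$'' into ``$\mathrm{Pic}(V)$ is torsion.'' Because $H^1(V,\sO_V)=0$, the first Chern class furnishes an injection $\mathrm{Pic}(V)\otimes_\Z\Q\hookrightarrow H^2(V,\Q)$ whose image lies in the $(1,1)$-part of the (mixed) Hodge structure; because also $H^2(V,\sO_V)=0$, the graded piece carrying the $(0,2)$-component of $H^2(V)$ vanishes, so $H^2(V,\C)$ is purely of type $(1,1)$ and a Lefschetz-type $(1,1)$ statement upgrades the injection to an isomorphism $\mathrm{Pic}(V)\otimes_\Z\Q\cong H^2(V,\Q)$. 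Consequently $H^2(V)=0$ if and only if $\mathrm{Pic}(V)\otimes_\Z\Q=0$, that is, if and only if $\mathrm{Pic}(V)$ is torsion. Chaining the equivalences yields $\mathrm{cd}(S,\ba)\le n-4\iff H^{n-3}_\ba(S)=0\iff H^2(V)=0\iff \mathrm{Pic}(V)\text{ torsion}$, and the Segre example (Example \ref{segre}) is the expected source of a nonzero free part.

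The main obstacle I anticipate is the middle step: rigorously matching $H^{n-3}_\ba(S)$ with $H^2(V)$ and, within that, proving that no \emph{transcendental} part survives in $H^2(V)$, so that the free rank of $\mathrm{Pic}(V)$ alone detects the obstruction. This is exactly where $\depth R\ge 4$ is indispensable---it forces $H^2(V,\sO_V)=0$ and thereby collapses the Hodge filtration on $H^2(V)$ onto its algebraic $(1,1)$-piece---and where one must control the mixed Hodge structure on the cohomology of the possibly singular quasi-affine scheme $V$ rather than on a smooth projective variety. A secondary technical point is finiteness: to pass freely between ``torsion,'' ``rank zero,'' and ``$\otimes_\Z\Q=0$,'' I would reuse the finite generation of the torsion subgroup of $\mathrm{Pic}(V)$ already established under $\depth R\ge 3$ in Theorem \ref{main thm}.
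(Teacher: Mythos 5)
Your outline reproduces the skeleton of the paper's proof of Theorem \ref{picard group} --- reduce via Corollary \ref{vanishing} to the single vanishing $H^{n-3}_{\ba}(S)=0$, convert that into a statement about degree-two cohomology of a punctured neighborhood, and use the two coherent-cohomology vanishings forced by $\depth R\ge 4$ to identify that cohomology with a Picard group --- but two of your steps fail as written. First, the opening reduction ``we may assume $S$ is complete'' undercuts everything that follows: a complete local ring of positive dimension is never essentially of finite type over a field (your own phrase ``complete regular and essentially of finite type'' is contradictory), and every tool you then invoke --- Ogus's de Rham criterion, finite generation of singular (co)homology, any Hodge-theoretic statement --- lives in the finite-type-over-$\C$ world. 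You also never reduce the abstract algebraically closed residue field to $\C$, which is where singular cohomology and Hodge theory exist. The paper runs the reduction in the opposite direction: Dutta's theorem \cite{Du} and the Lefschetz principle of Lemma \ref{Picard Lefschetz} bring $S$ to $\C[x_1,\dots,x_n]_{(x_1,\dots,x_n)}$, and Boutot's results \cite{Bo} identify $\Pic(\Spec \widehat{R}\setminus\{\m\widehat{R}\})$ with $\Pic^{\rm et\text{-}loc}(S/\ba)$ and with $\varinjlim_U \Pic(U\setminus\{x\})$, the limit over analytic neighborhoods of the corresponding point $x$ in a finite-type model $X$. Second, your decisive step applies mixed Hodge theory and a Lefschetz-$(1,1)$ statement to $V=\Spec\widehat{R}\setminus\{\m\widehat{R}\}$, which is not a complex variety, so no such theory applies to it; even for honest singular quasi-projective varieties the rational surjectivity of $c_1$ you need is not an off-the-shelf theorem. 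The paper's substitute is elementary and integral: for a contractible Stein neighborhood $U$ of $x$, depth $\ge 4$ gives $H^i(U\setminus\{x\},\sO_U)\cong H^{i+1}_{\{x\}}(U,\sO_U)=0$ for $i=1,2$, so the exponential sequence degenerates into an isomorphism $\Pic(U\setminus\{x\})\cong H^2(U\setminus\{x\},\Z)$; passing to the limit yields $\Pic(\Spec\widehat{R}\setminus\{\m\widehat{R}\})\cong H^3(X_{\rm an},(X\setminus\{x\})_{\rm an};\Z)$ on the nose. This realizes your ``no transcendental part survives'' intuition with no Hodge theory at all, and it also settles the finiteness issue you raise, since the right-hand side is finitely generated by \cite{Di}.

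There is also a genuinely missing argument at exactly the point you flag as the main obstacle. Ogus's criterion \cite{Og} is a statement about de Rham depth and involves local de Rham cohomology at \emph{all} points of $\Spec S/\ba$, not only at the closed point; the equivalence ``$H^{n-3}_{\ba}(S)=0$ if and only if the degree-two cohomology of the punctured neighborhood of $x$ vanishes'' is legitimate only after one proves that $H^{n-3}_{\ba}(S)$ is supported at $\{\m\}$. This supportedness is a second, independent use of the hypothesis $\depth S/\ba\ge 4$: for a prime $\fp\ne\m$ containing $\ba$ of height $n-3$, $n-2$, $n-1$ one gets $\depth (S/\ba)_{\fp}\ge 1,2,3$ respectively (localization drops depth by at most $\dim S/\fp$, or argue via local duality as the paper does), and then $H^{n-3}_{\ba S_{\fp}}(S_{\fp})=0$ follows respectively from the Hartshorne--Lichtenbaum theorem \cite{Ha2}, the second vanishing theorem \cite{Og}, and Corollary \ref{vanishing}; primes of height $\le n-4$ are trivial for dimension reasons. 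Without this step the ``dictionary for $j=3$'' you want does not exist, and this is precisely the claim (``$H^{n-3}_{\ba}(S)$ is supported only at the maximal ideal'') that the paper establishes before invoking Ogus's theorem.
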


We have several applications of our results.  
We obtain vanishing results of the Lyubeznik numbers $\lambda_{i,j}(R)$, numerical invariants of a Noetherian local ring $R$ containing a field introduced by Lyubeznik \cite{Lyu} (see Definition \ref{Lyubeznik def} for their definition). 
In particular, we prove that if $R$ is a $d$-dimensional local ring essentially of finite type over a field satisfying Serre's condition $(S_3)$, then $\lambda_{d-1, d}(R)=0$ (Proposition \ref{S3}).
We also have an extension of a recent result by Katzman-Lyubeznik-Zhang \cite{KLZ} which in turn extended the classical result of Hartshorne on connectedness of the punctured spectrum (Proposition \ref{KLZex}). 
Finally, we give sharp bounds on cohomological dimension of ideals whose quotients satisfy good depth conditions such as Serre's conditions $(S_i)$. 

\begin{thm}[=Theorem \ref{serre}]
Let $S$ be an $n$-dimensional regular local ring containing a field and let $\ba \subset S$ be an ideal of height $c$. 
\begin{enumerate}
\item If $S/\ba$ satisfies Serre's condition $(S_2)$ and $\dim S/\ba\geq 1$, then 
\[\cd(S,\ba)\leq n-1-\flo{n-2}{c}.\]
\item Suppose that $S$ is essentially of finite type over a field. 
If $S/\ba$ satisfies Serre's condition $(S_3)$ and $\dim S/\ba \ge 2$, then 
\[\cd(S,\ba)\leq n-2-\flo{n-3}{c}.\]
\end{enumerate} 
\end{thm}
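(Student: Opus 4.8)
The plan is to handle both parts at once by proving, for $t\in\{2,3\}$, the single bound
\[
\cd(S,\ba)\ \le\ n-(t-1)-\flo{n-t}{c},
\]
by induction on $n=\dim S$; part (1) is $t=2$ and part (2) is $t=3$, and the dimension hypotheses $\dim S/\ba\ge1$ (resp.\ $\ge2$) are exactly $\dim R\ge t-1$. Two structural facts set the stage. First, because $R=S/\ba$ is a quotient of the regular (hence Gorenstein and universally catenary) ring $S$ and satisfies $(S_2)$, it is equidimensional, so $\dim R=n-c$ and every prime $\mathfrak p\supseteq\ba$ satisfies $\height_S\mathfrak p=c+\height_R(\mathfrak p/\ba)$; in particular $\dim(S/\ba)_{\mathfrak p}=\height_S\mathfrak p-c$. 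Second, I use the global vanishing $\depth R\ge t\Rightarrow\cd(S,\ba)\le n-t$: for $t=2$ this is the Ogus and Peskine--Szpiro bound recalled in the introduction, and for $t=3$ it is Corollary \ref{vanishing}. Both inputs localize, since $H^i_{\ba}(S)_{\mathfrak p}\cong H^i_{\ba S_{\mathfrak p}}(S_{\mathfrak p})$ and $S_{\mathfrak p}$ is again regular (and essentially of finite type over the field when $t=3$) with $(S_t)$-quotient $(S/\ba)_{\mathfrak p}$. The base case $\flo{n-t}{c}=0$ is immediate: then $n-t<c$, the target is $n-(t-1)$, and $\depth R\ge\min(t,\dim R)\ge t-1$ already gives $\cd(S,\ba)\le n-(t-1)$.

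For the inductive step set $c_0=\cd(S,\ba)$ and choose a prime $\mathfrak p$ minimal in $\Supp H^{c_0}_{\ba}(S)$. Then $H^{c_0}_{\ba S_{\mathfrak p}}(S_{\mathfrak p})$ is nonzero and supported only at the closed point of $\Spec S_{\mathfrak p}$, so $c_0=\cd(S_{\mathfrak p},\ba S_{\mathfrak p})$. Suppose first $\mathfrak p\neq\m$, so $h:=\height_S\mathfrak p\le n-1$. If $h-c\ge t-1$, the inductive hypothesis applies to $(S_{\mathfrak p},\ba S_{\mathfrak p})$ and gives $c_0\le h-(t-1)-\flo{h-t}{c}$; since $h\mapsto h-(t-1)-\flo{h-t}{c}$ is non-decreasing (raising $h$ by $1$ raises $h$ by $1$ and $\flo{h-t}{c}$ by $0$ or $1$), its value at $h$ is at most its value at $n$, which is the target. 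If $h-c<t-1$, then $(S_t)$ forces $(S/\ba)_{\mathfrak p}$ to be Cohen--Macaulay, so $c_0=\cd(S_{\mathfrak p},\ba S_{\mathfrak p})=c$ by Peskine--Szpiro, and one checks $c\le n-(t-1)-\flo{n-t}{c}$ whenever $\flo{n-t}{c}\ge1$. Thus the bound holds as soon as some minimal prime of $\Supp H^{c_0}_{\ba}(S)$ is distinct from $\m$.

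The remaining, and main, case is $\Supp H^{c_0}_{\ba}(S)=\{\m\}$: the top nonvanishing local cohomology is $\m$-torsion, equivalently the Lyubeznik number $\lambda_{0,\,n-c_0}(R)$ is nonzero. Here localization sees nothing and the global bound only gives $c_0\le n-t$, short of the target by $\flo{n-t}{c}-1$. To attack this I would use that an $\m$-torsion local cohomology module over the regular ring $S$ is injective, a finite direct sum of copies of $E(k)$. Cutting by a general $x\in\m$ (after harmlessly passing to an infinite residue field) and using the cohomology sequence of $0\to S\xrightarrow{x}S\to S/xS\to0$, injectivity makes multiplication by $x$ surjective on $H^{c_0}_{\ba}(S)$, so $H^{c_0}_{\ba(S/xS)}(S/xS)=0$ while $H^{c_0-1}$ survives; hence $\cd$ drops by exactly one under the section, where $S/xS$ is regular of dimension $n-1$ with $(S/xS)$-quotient of height $c$ satisfying $(S_{t-1})$.

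I expect the genuine obstacle to be precisely this closed-point case. The plain hyperplane reduction degrades $(S_t)$ to $(S_{t-1})$, so feeding the section into the $(t-1,n-1)$ instance yields $n-(t-1)-\flo{n-t}{c}+1$, one unit short of sharp. Recovering that unit is the crux: I would either show that in the $\m$-torsion case the cohomological dimension along the punctured spectrum is small enough that the monotone estimate of the second paragraph already forces $c_0\le n-(t-1)-\flo{n-t}{c}$, or invoke the vanishing of the relevant Lyubeznik numbers obtained earlier from Theorem \ref{main thm} and Theorem \ref{picard group} (such as the $(S_3)$-vanishing $\lambda_{d-1,d}(R)=0$) to rule out or control the offending $\m$-torsion contribution. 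This interface between the global vanishing theorems and the combinatorial floor is where the real work lies.
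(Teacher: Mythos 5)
You have correctly isolated where the difficulty lies, but the proposal does not overcome it: the case $\Supp H^{c_0}_{\ba}(S)=\{\m\}$ is left genuinely open, and none of your suggested remedies can close it. Your first remedy is circular: knowing that $\cd(S_{\fp},\ba S_{\fp})$ is small for every $\fp\ne\m$ places no upper bound on $c_0$ beyond the global estimate $c_0\le n-t$, precisely because an $\m$-torsion top cohomology module is invisible to localization. The hyperplane remedy is, as you yourself computed, one unit short, and the loss is structural: the section retains only $(S_{t-1})$, and for parameter values where the $(S_{t-1})$-bound is sharp (the Segre products of Example \ref{segre}) no argument that sees only the section can do better; recovering the unit would require using that the section comes from an $(S_t)$ ring, which is the original problem again. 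The Lyubeznik-number remedy also fails: what is nonzero in the bad case is $\lambda_{0,\,n-c_0}(R)$, whereas the vanishing statements available (Propositions \ref{lyubeznik number} and \ref{S3}) concern $\lambda_{i,j}$ with $i\ge j-2$ and $j<\depth R$, resp.\ $\lambda_{d-1,d}$, hence say nothing about $\lambda_{0,j}$ once $j\ge 3$; moreover those propositions are themselves deduced from Corollary \ref{vanishing}, which you have already used in full, so they contain no additional information. The missing engine is the Huneke--Lyubeznik induction theorem \cite[Theorem 2.5]{HL}: the paper does not attempt a direct induction at all, but reformulates that theorem as Theorem \ref{keyHL} --- whose conclusion $\cd(S,\ba)\le f(d)$ is forced by hypotheses concerning only $\cd(S_{\fp},\ba_{\fp})$ at primes of height at most $l'$ --- and then, for part (2), verifies those hypotheses for $f(m)=m-2-\flo{m-3}{c}$ using part (1) when $\height\fp=c+2$ and Corollary \ref{vanishing} when $c+3\le\height\fp\le 2c+2$. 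Your localization-plus-monotonicity analysis essentially reproduces that verification step; what it cannot replace is the Huneke--Lyubeznik theorem itself, which is exactly the device that defeats the $\m$-torsion case.

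A smaller but real error: in the case $h-c<t-1$ you claim that Cohen--Macaulayness of $(S/\ba)_{\fp}$ gives $\cd(S_{\fp},\ba S_{\fp})=c$ ``by Peskine--Szpiro.'' That implication is a characteristic-$p$ theorem and fails in characteristic zero: the $2\times 2$ minors of a generic $2\times 3$ matrix define a Cohen--Macaulay quotient of height $2$ with $\cd=3$ (see the end of Example \ref{segre}). Your two instances can be repaired because there $\dim(S/\ba)_{\fp}\le 1$: when $h=c$ the ideal $\ba S_{\fp}$ is $\fp S_{\fp}$-primary, so $\cd=h=c$; and when $h=c+1$ (only relevant for $t=3$) one has $\depth(S/\ba)_{\fp}\ge 1$, so the Hartshorne--Lichtenbaum theorem gives $\cd\le h-1=c$. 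But the citation as written is wrong, and it is symptomatic of the larger issue: the only characteristic-free inputs at your disposal are the vanishing theorems for depth $\ge 1,2,3$, and localization alone cannot upgrade them to the floor-function bound.
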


Such results partly answer a question raised in \cite[Question 2.5]{Hu}.

\vspace*{0.75em}

In this paper, all rings are Noetherian commutative rings with unity and $\C$ is the field of complex numbers. When discussing the completion of a local ring $(R, \m)$, we will mean the $\m$-adic completion of $R$. 
We denote the completion of $R$ by $\widehat{R}$ and the completion of the strict henselization of the completion of $R$ by $\widehat{(\widehat{R})^{\rm sh}}$. 
}
\begin{small}
\begin{acknowledgement}
The authors would like to thank Matteo Varbaro for inspirations on this work and generously sharing his unpublished notes and many useful comments.  
We are also indebted to Bhargav Bhatt for helpful discussions and pointing out a mistake in a previous version of this work. 
We are grateful to Brian Conrad, Craig Huneke, Atsushi Ito and Yusuke Nakamura for valuable conversions. 
The first author was partially supported by NSF grant  DMS 1104017.
The second author was partially supported by Grant-in-Aid for Scientific Research (C) No.26400039 from JSPS. 
\end{acknowledgement}
\end{small}


\section{Main results on cohomological dimension}\label{vanishing section}

First we recall the notion of local Picard groups. 
\begin{defn}
Let $(R, \m)$ be a Noetherian local ring. 
The \textit{local Picard group} $\mathrm{Pic}^{\rm loc}(R)$ of $R$ is the Picard group $\mathrm{Pic}(\Spec R \setminus \{\m\})$ of the punctured spectrum of $R$. 
The \textit{\'etale-local Picard group} $\mathrm{Pic}^{\rm et-loc}(R)$ of $R$ is the Picard group $\mathrm{Pic}(\Spec R^{\rm h} \setminus \{\m R^{\rm h}\})$ of the punctured spectrum of the henselization $R^{\rm h}$ of $R$. 
\end{defn}

\begin{rem}
If $(R, \m)$ is a Noetherian local $k$-algebra of depth $\ge 2$ and with residue field $k$, then $\mathrm{Pic}^{\rm et-loc}(R)$ is isomorphic to $\mathrm{Pic}(\Spec \widehat{R} \setminus \{\m \widehat{R}\})$ by \cite[Chapter II Corollaire 7.2]{Bo}. 
\end{rem}

Next we show that ``Lefschetz principle" holds for \'etale-local Picard groups.

\begin{lem}\label{Picard Lefschetz}
Let $L/K$ be an extension of algebraically closed fields of characteristic zero. 
Pick polynomials $g_1, \dots, g_s \in (x_1, \dots, x_n) K[x_1, \dots, x_n]$, and denote 
\begin{align*}
(A_{\m}, \m A_{\m})&=K[x_1, \dots, x_n]_{(x_1, \dots, x_n)}/(g_1, \dots, g_s), \\
(B_{\mathfrak{n}}, \mathfrak{n} B_{\mathfrak{n}})&=L[x_1, \dots, x_n]_{(x_1, \dots, x_n)}/(g_1, \dots, g_s).
\end{align*} 
We suppose that $\depth A_{\m} \ge 2$ and $H^2_{\m A_{\m}}(A_{\m})$ is a finite-dimensional $K$-vector space. 
\begin{enumerate}
    \item The torsion subgroup of $\mathrm{Pic}^{\rm et-loc}(A_{\m})$ is isomorphic to that of $\mathrm{Pic}^{\rm et-loc}(B_{\mathfrak{n}})$. 
    \item If $\depth A_{\m} \ge 3$, then $\mathrm{Pic}^{\rm et-loc}(A_{\m}) \cong \mathrm{Pic}^{\rm et-loc}(B_{\mathfrak{n}})$. 
\end{enumerate}
\end{lem}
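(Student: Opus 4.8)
The plan is to realize both étale-local Picard groups as pullbacks along a single flat base change, and then to separate a ``continuous'' part, governed by $H^2_\m$ and invisible to torsion, from a ``discrete'' part that is rigid under enlarging the ground field.

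First I would set up the comparison map. The inclusion $K \hookrightarrow L$ induces a flat local homomorphism $A_\m \to B_\n$ whose closed fibre is the field $L$; in characteristic zero this fibre is geometrically regular, so $A_\m \to B_\n$ is a regular homomorphism, in particular faithfully flat. Passing to henselizations $A_\m^{\rm h} \to B_\n^{\rm h}$ and deleting the closed points yields a faithfully flat morphism $U_B \to U_A$ of the two punctured spectra, hence a pullback homomorphism $\phi \colon \mathrm{Pic}^{\rm et-loc}(A_\m) \to \mathrm{Pic}^{\rm et-loc}(B_\n)$. Since every line bundle on $U_B$, together with any relevant trivialization or isomorphism, is already defined over an algebraically closed subfield of $L$ of finite transcendence degree over $K$, I may write $\mathrm{Pic}^{\rm et-loc}(B_\n)$ as a filtered colimit of such groups; this reduces both assertions to the case where $L$ has finite transcendence degree over $K$, and then, by factoring the extension, to a single comparison of this type.

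Next I would isolate the continuous part. For a local ring $R$ of depth $\ge 2$ with punctured spectrum $U$, the local cohomology sequence gives $H^1(U,\sO_U) \cong H^2_\m(R)$, and this is the tangent space to the local Picard functor. In characteristic zero the resulting infinitesimal contribution to $\mathrm{Pic}^{\rm et-loc}(R)$ is a $K$-vector space $V$, so there is an exact sequence $0 \to V \to \mathrm{Pic}^{\rm et-loc}(R) \to D \to 0$ with $V$ uniquely divisible and torsion free and $D$ a residual discrete quotient. The hypothesis that $H^2_{\m A_\m}(A_\m)$ is a finite-dimensional $K$-vector space keeps $V$ finite dimensional, and when $\depth A_\m \ge 3$ we have $H^2_{\m A_\m}(A_\m)=0$, so $V=0$ and $\mathrm{Pic}^{\rm et-loc}(A_\m)=D$. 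Because $V$ is torsion free and divisible, the torsion subgroup of $\mathrm{Pic}^{\rm et-loc}(R)$ maps isomorphically onto that of $D$. It then remains to show that $\phi$ matches the discrete parts: the torsion of $D$ is detected, via the Kummer sequence, by the étale cohomology of $U$ with $\Z/m$-coefficients, so I would invoke the invariance of such cohomology under an extension of algebraically closed fields of characteristic zero. To apply this to the punctured spectrum of a henselization, which is not of finite type, I would present the henselization as a filtered limit of étale neighborhoods and pass to the limit, reducing to finite-type schemes over $K$. This gives the isomorphism of torsion subgroups in part (1). For part (2), where $V=0$, one must in addition match the free quotient of $D$; here I would use that this free quotient is a Néron--Severi-type group, related via a resolution to the Picard group of the projective exceptional fibre over $k$, and that a Néron--Severi group is unchanged under base change of a fixed variety along an extension of algebraically closed fields, since the component group of the Picard scheme is a finitely generated constant group scheme with the same $K$- and $L$-points. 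Combining invariance of the torsion and of the free part shows $\phi$ is an isomorphism when $\depth A_\m \ge 3$.

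The main obstacle I anticipate is making the decomposition $0 \to V \to \mathrm{Pic}^{\rm et-loc}(R) \to D \to 0$ precise and, relatedly, bridging from the local punctured spectrum to a projective model so that the étale-cohomology invariance and the Néron--Severi rigidity may be applied; the finiteness hypothesis on $H^2_{\m A_\m}(A_\m)$ is exactly what guarantees that $V$ stays finite dimensional and contributes no torsion, and I expect it to enter precisely at this step.
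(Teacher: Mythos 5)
Your outline breaks at its central structural claim. You assert an exact sequence $0 \to V \to \mathrm{Pic}^{\rm et\textrm{-}loc}(R) \to D \to 0$ in which $V$, the ``continuous'' part with tangent space $H^2_\m(R)$, is a uniquely divisible \emph{torsion-free} $K$-vector space, so that the torsion of $\mathrm{Pic}^{\rm et\textrm{-}loc}(R)$ maps isomorphically onto the torsion of the discrete quotient $D$. This is false under the lemma's hypotheses. The correct structure (Boutot) is that the local Picard functor is representable by a group scheme locally of finite type over $K$ whose identity component $G^0$ is a connected commutative algebraic group with Lie algebra $H^2_\m(R)$ --- but $G^0$ need not be unipotent: it can be an abelian variety or a torus, and then it carries enormous torsion. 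Concretely, let $R$ be the cone over a projectively normal embedding of $X = E \times \pP^1$, $E$ an elliptic curve over $\C$. Then $\depth R = 2$ and $H^2_\m(R) \cong \bigoplus_m H^1(X, \sO_X(m)) \cong H^1(X,\sO_X) \cong \C$ (Kodaira vanishing kills all twists $m \neq 0$), so the hypotheses of the lemma hold; yet $\mathrm{Pic}^{\rm et\textrm{-}loc}(R)$ contains $\mathrm{Pic}^0(E)(\C) \cong E(\C)$, whose torsion is $(\Q/\Z)^2$. Here $G^0$ is the abelian variety $\widehat{E}$, this torsion lies entirely in the identity component, and it dies in any discrete quotient $D$; so your reduction of part (1) to a comparison of $D$'s cannot work. (The paper's own Remark 2.6, the cone over $E\times E$, is the same phenomenon in characteristic $p$.) A second, smaller gap: you never prove your pullback map $\phi$ is injective; for non-proper schemes such as punctured spectra, injectivity of $\mathrm{Pic}$ under faithfully flat base change is not automatic.

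The paper avoids both problems by invoking Boutot's theory rather than decomposing the group by hand: the hypotheses $\depth A_\m \ge 2$ and $\dim_K H^2_{\m A_\m}(A_\m) < \infty$ are exactly the representability conditions for the local Picard scheme [Bo, Ch.~II, Th.~7.8]; the base-change isomorphism $\mathbf{Pic}^{\rm loc}_{B_\n/L} \cong \mathbf{Pic}^{\rm loc}_{A_\m/K} \otimes_K L$ then gives a natural \emph{injection} $\mathrm{Pic}^{\rm et\textrm{-}loc}(A_\m) = \mathbf{Pic}^{\rm loc}_{A_\m/K}(K) \hookrightarrow \mathbf{Pic}^{\rm loc}_{B_\n/L}(L) = \mathrm{Pic}^{\rm et\textrm{-}loc}(B_\n)$, and the smooth base change theorem in \'etale cohomology, fed into the Kummer sequence, shows this injection is bijective on $n$-torsion for every $n$; part (2) is a direct citation of [Bo, Ch.~III, Prop.~2.10]. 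Your \'etale-cohomology comparison (reduction to finite transcendence degree, limit over \'etale neighborhoods, invariance of $H^*(-,\Z/m)$ over extensions of algebraically closed fields) is essentially the paper's Claim and is the right tool, but it must be applied to the $n$-torsion of the \emph{full} Picard group, with injectivity supplied by the scheme structure. If you want a conceptual substitute for your vector-space claim, the honest statement is: for each $n$, the $n$-torsion subgroup scheme $G[n]$ of the local Picard scheme is finite \'etale in characteristic zero (its tangent space is the kernel of multiplication by $n$ on a $K$-vector space, hence zero), so $G[n](K) = G[n](L)$; torsion does not grow under extension of algebraically closed fields even though it may sit inside the identity component.
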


\begin{proof}
(2) immediately follows from \cite[Chapter III Proposition 2.10]{Bo}, so we will only prove (1). 
Let $A=K[x_1, \dots, x_n]/(g_1, \dots, g_s)$ and $B=L[x_1, \dots, x_n]/(g_1, \dots, g_s)$. 
Let $x_A \in \Spec A$ (resp. $x_B \in \Spec B$) be the closed point corresponding to the maximal ideal $\m:=(x_1, \dots, x_n)$ (resp. $\mathfrak{n}:=(x_1, \dots, x_n)$), and denote $U_A=\Spec A \setminus \{x_A\}$ (resp. $U_B=\Spec B \setminus \{x_B\}$). 
\begin{claim}
For all integers $j$ and $n \ge 1$, there exists a natural isomorphism 
\[
H^j(U_A \otimes_A A_{\m}^{\rm h}, \mu_n) \cong H^j(U_B \otimes_B B_{\mathfrak{n}}^{\rm h}, \mu_n), 
\]
where $A_{\m}^{\rm h}$ (resp. $B_{\mathfrak{n}}^{\rm h}$) is the (strict) henselization of $A_{\m}$ (resp. $B_{\mathfrak{n}}$). 
\end{claim}
\begin{proof}[Proof of Claim]
We may reduce to the case where  $L$ is the algebraic closure of a rational function field $K(t_1, \dots, t_m)$ over $K$. 
Let $C=K[t_1, \dots, t_m, x_1, \dots, x_n]/(g_1, \dots, g_s)$ and $U_C=\mathbb{A}^m_K \times_{\Spec K} U_A=\Spec C \setminus \Spec K[t_1, \dots, t_m]$. 
We denote by $x_C \in \Spec C$ the point corresponding to the prime ideal $(x_1, \dots, x_n)$ and by $\overline{x_C}$ the geometric point over $x_C$ corresponding to $L$. 
By the smooth base change theorem for \'etale cohomology, one has 
\[
(R^j{i_C}_*f^*\mu_n)_{\overline{x_C}} \cong (f^*R^j{i_A}_* \mu_n)_{\overline{x_C}}, 
\]
where $f:\Spec C \to \Spec A$ is a natural map and $i_A:U_A \hookrightarrow \Spec A$ (resp. $i_C:U_C \hookrightarrow \Spec C$) is a natural inclusion. 
Note that the strict henselization of $\Spec C$ at $\overline{x_C}$ is isomorphic to $\Spec B_{\mathfrak{n}}^{\rm h}$ and that of $\Spec A$ at $\overline{x_C} \to \Spec C \xrightarrow{f} \Spec A$ is isomorphic to $\Spec A_{\m}^{\rm h}$. 
Thus, 
\begin{align*}
(R^j{i_C}_*f^*\mu_n)_{\overline{x_C}} & \cong H^j(U_C \otimes_C B_{\mathfrak{n}}^{\rm h}, \mu_n) \cong H^j(U_B \otimes_B B_{\mathfrak{n}}^{\rm h}, \mu_n) ,\\
(f^*R^j{i_A}_* \mu_n)_{\overline{x_C}} & \cong H^j(U_A \otimes_A A_{\m}^{\rm h}, \mu_n),
\end{align*}
so we obtain the assertion. 
\end{proof}

By virtue of \cite[Chapter II Th\'eor\`eme 7.8]{Bo}, the local Picard scheme $\mathbf{Pic}^{\rm loc}_{A_{\m}/K}$ of $A_{\m}$ over $K$ and the local Picard scheme $\mathbf{Pic}^{\rm loc}_{B_{\mathfrak{n}}/L}$ of $B_{\mathfrak{n}}$ over $L$ exist (see \cite[Chapter II]{Bo} for the definition of local Picard schemes). It then follows from the proof of  \cite[Chapter III Proposition 2.7]{Bo} that $\mathbf{Pic}^{\rm loc}_{B_{\mathfrak{n}}/L} \cong \mathbf{Pic}^{\rm loc}_{A_{\m}/K} \otimes_K L$. 
Since $L$ and $K$ are algebraically closed fields, one has a natural inclusion 
\[
\mathrm{Pic}^{\rm et-loc}(A_{\m}) \cong \mathbf{Pic}^{\rm loc}_{A_{\m}/K}(K) \hookrightarrow \mathbf{Pic}^{\rm loc}_{B_{\mathfrak{n}}/L}(L) \cong \mathrm{Pic}^{\rm et-loc}(B_{\mathfrak{n}}).
\]
Applying the above claim to the Kummer sequence, we see that this inclusion of local Picard groups 
induces an isomorphism of $n$-torsion subgroups 
\[
{}_n\mathrm{Pic}^{\rm et-loc}(A_{\m}) \cong {}_n \mathrm{Pic}^{\rm et-loc}(B_{\mathfrak{n}})
\]
for all integers $n \ge 1$, because $\mathrm{Pic}^{\rm et-loc}(A_{\m})=\mathrm{Pic}(U_A \otimes_A A_{\m}^{\rm h})$ (resp. $\mathrm{Pic}^{\rm et-loc}(B_{\mathfrak{n}})=\mathrm{Pic}(U_B \otimes_B B_{\mathfrak{n}}^{\rm h})$). 
This means that the torsion subgroup of $\mathrm{Pic}^{\rm et-loc}(A_{\m})$ is isomorphic to that of $\mathrm{Pic}^{\rm et-loc}(B_{\mathfrak{n}})$. 
\end{proof}

Now we state one of the main results of this paper. Example \ref{h2} demonstrates that the assumptions of Theorem \ref{3rd vanishing in char 0} are optimal. 
\begin{thm}\label{3rd vanishing in char 0}
Let $(S, \m)$ be an $n$-dimensional regular local ring essentially of finite type over a field of characteristic zero and let $\ba$ be an ideal of $S$. 
Suppose that  $\depth S/\ba \ge 2$ and $H^2_{\m}(S/\ba)$ is an $S/\m$-vector space. 
Then $\mathrm{cd}(S, \ba) \le n-3$ if and only if the torsion subgroup of $\mathrm{Pic}^{\rm loc}(\widehat{(\widehat{S/\ba})^{\rm sh}})$ is finitely generated. 
\end{thm}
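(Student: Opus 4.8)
The plan is to reduce the asserted equivalence, by faithfully flat base change and the Lefschetz principle of Lemma~\ref{Picard Lefschetz}, to a ring essentially of finite type over $\C$ with a $\C$-rational closed point, and there to match the single boundary vanishing $H^{n-2}_\ba(S)=0$ with the finiteness of the torsion of the local Picard group via the theory of the local Picard scheme.

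First I would rephrase the left-hand side homologically and carry out the field reduction. Since $\cd(S,\ba)=\max\{i\mid H^i_\ba(S)\neq0\}$ and $\depth S/\ba\ge2$ already gives $\cd(S,\ba)\le n-2$ by the results of Ogus and Peskine--Szpiro recalled in the introduction, the inequality $\cd(S,\ba)\le n-3$ is \emph{equivalent} to the single vanishing $H^{n-2}_\ba(S)=0$; this is what I will characterize. Both sides are unaffected by the tower of faithfully flat local maps $S\to\widehat S\to(\widehat S)^{\rm sh}\to\widehat{(\widehat S)^{\rm sh}}=:S'$: local cohomology commutes with these flat extensions, the hypotheses $\depth\ge2$ and ``$H^2_\m$ is a $k$-vector space'' descend, and $S'$ is again $n$-dimensional regular local but now with algebraically closed residue field and $S'/\ba S'=\widehat{(\widehat{S/\ba})^{\rm sh}}$. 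Thus the right-hand group is exactly $\mathrm{Pic}^{\rm loc}(S'/\ba S')$, which, $S'/\ba S'$ being strictly henselian, is its \'etale-local Picard group over an algebraically closed field---precisely the object governed by Lemma~\ref{Picard Lefschetz} and the Remark above.

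Second I would produce a finite-type model. The module $H^2_\m(S/\ba)$ is Artinian, so the hypothesis that it be a $k$-vector space forces it to be \emph{finite}-dimensional; as $H^2_\m(S/\ba)\cong H^1(U,\sO_U)$ for the punctured spectrum $U$ (using $\depth\ge2$), this is the tangent space of the local Picard functor, and its finiteness makes the relevant local Picard scheme of finite type. Extending the ground field to its algebraic closure and localizing at a point over $\m$, I may assume the closed point is rational over an algebraically closed field; $\cd$ is preserved because such field extensions are faithfully flat, and by Lemma~\ref{Picard Lefschetz} the torsion of the \'etale-local Picard group is unchanged if that field is replaced by $\C$. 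This places me in the setting of Theorem~\ref{main thm}: a ring $A_\m$ essentially of finite type over $\C$, with residue field $\C$, punctured spectrum $U_A$, and a finite-type local Picard scheme $\mathbf{Pic}^{\rm loc}_{A_\m/\C}$ whose group of $\C$-points is $\mathrm{Pic}^{\rm et-loc}(A_\m)$; it remains to prove the equivalence there.

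Finally, over $\C$ I would show that $H^{n-2}_\ba(S)=0$ holds if and only if the torsion of $\mathrm{Pic}^{\rm et-loc}(A_\m)$ is finite (equivalently finitely generated). On the Picard side I would run the Kummer sequence on $U_A\otimes_A A_\m^{\rm h}$ exactly as in the proof of Lemma~\ref{Picard Lefschetz}, tying the $m$-torsion of the local Picard group to $H^2(U_A\otimes_A A_\m^{\rm h},\mu_m)$: the torsion is finite precisely when the abelian-variety and toric parts of $\mathbf{Pic}^{\rm loc}_{A_\m/\C}$ vanish, leaving only the unipotent part, which is torsion-free in characteristic zero and is controlled by $H^2_\m(S/\ba)$. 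On the cohomological side I would invoke the second- and third-vanishing theorems of Ogus and Hartshorne, in their \'etale form valid after the reduction to $\C$, to characterize $H^{n-2}_\ba(S)=0$ by the vanishing of the reduced \'etale cohomology of the punctured (formal) neighborhood in the same low range---the same $H^{\le2}(\mu_m)$ data. Matching the two descriptions gives the theorem. I expect this last step to be the main obstacle: pinning down the exact dictionary between a single local-cohomology vanishing and the finiteness of the Picard torsion, through the structure theory of the local Picard scheme (\cite{Bo} and Grothendieck) together with the formal/$\ell$-adic form of the higher vanishing theorems, is where the genuine work lies, whereas the field-theoretic reductions are formal once Lemma~\ref{Picard Lefschetz} is in hand.
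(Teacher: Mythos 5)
Your plan shares the paper's skeleton (reduce to the single vanishing $H^{n-2}_{\ba}(S)=0$, pass to a model over $\C$, compare with the local Picard group), but two essential steps are missing or would fail as written. The first is the reduction to a finite-type model over $\C$. Passing along $S\to\widehat S\to(\widehat S)^{\rm sh}\to\widehat{(\widehat S)^{\rm sh}}$ is harmless but also useless: the resulting ring is complete, no longer essentially of finite type over a field, so Lemma~\ref{Picard Lefschetz} and every analytic argument downstream are inapplicable to it, and descending from it back to a finite-type model is precisely the problem to be solved. Your substitute, ``extend the ground field to its algebraic closure and localize at a point over $\m$,'' fails whenever $\m$ does not come from a maximal ideal of the finite-type algebra: in that case the residue field $S/\m$ is transcendental over the ground field, and no base change to the algebraic closure of the ground field makes the closed point rational. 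This is exactly what the paper's appeal to Dutta's theorem \cite[Theorem 1.3]{Du} is for: it produces a localization $T_0$ of a polynomial ring at a \emph{maximal} ideal, faithfully flat under $S$, with $T_0/\ba_{T_0}\cong S/\ba$ and $\cd(S,\ba)=\cd(T_0,\ba_{T_0})$; only then can one extend scalars to $\overline{k}$. Even after that, one more care is needed: $\overline{k}$ need not embed in $\C$, so the paper first descends to the algebraic closure $\overline{k'}$ of the field generated by the coefficients of generators of $\ba$, embeds $\overline{k'}$ in $\C$, and applies Lemma~\ref{Picard Lefschetz} twice.

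The second gap is the heart of the theorem: the equivalence over $\C$ is never proved --- your final paragraph is a programme, not an argument, as you yourself acknowledge. Concretely: (i) you never show that $H^{n-2}_{\ba}(S)$ is supported only at $\m$; this is where the hypothesis that $H^2_{\m}(S/\ba)$ is a $k$-vector space actually enters (via local duality and the second vanishing theorem at height-$(n-1)$ primes), whereas you use that hypothesis only for finite-dimensionality of a tangent space. Without this support statement no pointwise criterion for the vanishing can be applied. (ii) The bridge the paper uses is \cite[Theorem 2.8]{Og}, which converts the vanishing of $H^{n-2}_{\ba}(S)$ into vanishing of local de Rham cohomology, plus Hartshorne's comparison \cite{Ha} identifying the latter with $H^2(X_{\rm an},(X\setminus\{x\})_{\rm an};\C)$, then the universal coefficient theorem to restate this as torsionness of $H^2(X_{\rm an},(X\setminus\{x\})_{\rm an};\Z)$, and finally the exponential sequence on contractible Stein neighborhoods together with Boutot's identification $\mathrm{Pic}^{\rm et-loc}(S/\ba)\cong\varinjlim_{U\ni x}\Pic(U\setminus\{x\})$, yielding the exact sequence
\[
0\to H^2(X_{\rm an},(X\setminus\{x\})_{\rm an};\Z)\to H^2_{\{x\}}(X_{\rm an},\sO_{X_{\rm an}})\to\mathrm{Pic}^{\rm et-loc}(S/\ba)\to H^3(X_{\rm an},(X\setminus\{x\})_{\rm an};\Z),
\]
from which both implications follow. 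Your proposed substitute --- ``\'etale forms'' of the second and third vanishing theorems matched against the structure theory of the local Picard scheme --- does not exist off the shelf: Ogus's criterion is a de Rham ($\C$-coefficient) statement, and finite ($\mu_m$) coefficients cannot detect the rank of $H^2(\cdot;\Z)$, which is what the vanishing amounts to; recovering rank information \'etale-theoretically would require an $\ell$-adic limit argument that in the end rests on the same de Rham/singular comparison. So the ``dictionary'' you defer is not a routine verification; it is the actual content of the proof.
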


\begin{proof}
Let $k$ be the residue field of $S$, and denote by $\overline{k}$ the algebraic closure of $k$. 
Fix a nonzero element $f \in \ba \cap \m^2$.  By \cite[Theorem 1.3]{Du}, we can find a 
regular local ring $(T_0, \m_{T_0})$ with residue field $k$ and an element $g \in T_0$ such that : (1) $T_0$ is the localization of a polynomial ring at a maximal ideal, and  (2) $T_0 \subset S$ is a faithfully flat extension which induces an isomorphism $T_0/(g) \cong S/(f)$. Let $\phi:T_0 \to T_0/(g) \to S/(f)$ be a ring homomorphism induced by the isomorphism in (2) and let $\ba_{T_0}$ be the ideal 
$\phi^{-1}(\ba S/(f)) \subset T_0$. Then  $T_0/\ba_{T_0} \cong S/\ba$.
Let $(T, \m_{T})$ be a faithfully flat extension of $T_0$ obtained by localizing the polynomial ring $\overline{k}[x_1, \dots, x_n]$ at a maximal ideal lying over $\m_{T_0}$, and set $\ba_{T}=\ba_{T_0} T$. 
Then $\mathrm{cd}(S, \ba)=\mathrm{cd}(T_0, \ba_{T_0})=\mathrm{cd}(T, \ba_{T})$. 
Since the completion of the strict henselization of the completion of $S/\ba$ is isomorphic to the completion of $T/\ba_{T}$, one has $\mathrm{Pic}^{\rm loc}(\widehat{(\widehat{S/\ba})^{\rm sh}}) \cong \mathrm{Pic}^{\rm et-loc}(T/\ba_{T})$. 
Therefore, it is enough to show that $\mathrm{cd}(T, \ba_T) \le n-3$ if and only if  the torsion subgroup of $\mathrm{Pic}^{\rm et-loc}(T/\ba_{T})$ is finitely generated. 
It is easy to see that $\depth T/\ba_{T} \ge 2$ and $H^2_{\m_{T}}(T/\ba_{T})$ is a $T/\m_{T}$-vector space, so we may assume that $S$ is the localization of the polynomial ring $\overline{k}[x_1, \dots, x_n]$ at the maximal ideal $(x_1, \dots, x_n)$. 
We remark that $\mathrm{Pic}^{\rm loc}(\widehat{(\widehat{S/\ba})^{\rm sh}}) \cong \mathrm{Pic}^{\rm et-loc}(S/\ba)$ in this case.  

Consider the subfield $k'$ of $k$ obtained by adjoining to $\Q$ all the coefficients of a set of generators $f_1, \dots, f_r$ of $\ba$. 
By a standard argument, there exists a subfield of $\C$ isomorphic to $k'$, 
so the algebraic closure $\overline{k'}$ of $k'$ can be embedded in $\C$. 
Since the $f_i$ are defined over $\overline{k'}$, set $(S_{\overline{k'}}, \m_{\overline{k'}})=\overline{k'}[x_1, \dots, x_n]_{(x_1, \dots, x_n)}$ and $\ba_{\overline{k'}}=(f_1, \dots, f_r) S_{\overline{k'}}$. 
Similarly, set $(S_{\C}, \m_{\C})=\C[x_1, \dots, x_n]_{(x_1, \dots, x_n)}$ and $\ba_{\C}=(f_1, \dots, f_r) S_{\C}$.
Then $\mathrm{cd}(S_{\C}, \ba_{\C})=\mathrm{cd}(S_{\overline{k'}}, \ba_{\overline{k'}})=\mathrm{cd}(S, \ba)$ and $\depth S_{\C}/\ba_{\C}=\depth S_{\overline{k'}}/\ba_{\overline{k'}}=\depth S/\ba \ge 2$. 
Also, it is easy to check that $H^2_{\m_{\C}}(S_{\C}/\ba_{\C})$ is a $\C$-vector space. 
Thus, we can reduce the problem to the case where $k=\C$ with the aid of Lemma \ref{Picard Lefschetz}.  

From now on, we consider the case where $S=\C[x_1, \dots, x_n]_{(x_1, \dots, x_n)}$. 
Since $\depth S/\ba \ge 2$, we know from \cite[Corollary 2.11]{Og} that $\mathrm{cd}(S, \ba) \le n-2$. 
Therefore, $\mathrm{cd}(S, \ba)\le n-3$ if and only if $H^{n-2}_{\ba}(S)=0$. 

\begin{claim}
$H^{n-2}_{\ba}(S)$ is supported only at the maximal ideal $\m$. 
\end{claim}
\begin{proof}[Proof of Claim]
We may assume that $S$ is a complete regular local ring for the proof of Claim. 
We will show that $H^{n-2}_{\ba S_{\fp}}(S_{\fp})=0$ for all prime ideals $\fp$ of height $n-1$. 
By the second vanishing theorem \cite[Corollary 2.11]{Og}, it is enough to show that $\depth S_{\fp}/\ba S_{\fp} \ge 2$ for all prime ideals $\fp$ of height $n-1$ containing $\ba$.  
Since $H^2_{\m}(S/\ba)$ is an $S/\m$-vector space, $\mathrm{Ext}^{n-2}_S(S/\ba, S)$ is also an $S/\m$-vector space by local duality.  
Similarly, since $\depth S/\ba \ge 2$, we see by local duality that $\mathrm{Ext}^{n-1}_S(S/\ba, S)=0$.
Thus, $\mathrm{Ext}^{n-2}_{S_{\fp}}(S_{\fp}/\ba S_{\fp}, S_{\fp})=\mathrm{Ext}^{n-1}_{S_{\fp}}(S_{\fp}/\ba S_{\fp}, S_{\fp})=0$ for all prime ideals $\fp$ of height $n-1$.
Applying local duality again, one has that $H^1_{\fp S_{\fp}}(S_{\fp}/\ba S_{\fp})=H^0_{\fp S_{\fp}}(S_{\fp}/\ba S_{\fp})=0$. 
\end{proof}

Let $f_1, \dots, f_r \in (x_1, \dots, x_n)\C[x_1, \dots, x_n]$ be a set of generators of $\ba$ and $X$ be the closed subscheme of $\C^n$ defined by the $f_i$. 
If $x \in X$ denotes the origin of $\C^n$, then $\sO_{X, x} \cong S/\ba$. 
It follows from \cite[Theorem 2.8]{Og} and the above claim that the vanishing of $H^{n-2}_{\ba}(S)$ is equivalent to the vanishing of the local de Rham cohomology $H^2_{\{x\}, {\rm dR}}(\Spec \widehat{\sO}_{X,x})$, where $\widehat{\sO}_{X, x}$ is the completion of $\sO_{X, x}$. 
This local de Rham cohomology is isomorphic to the relative singular cohomology $H^2(X_{\rm an}, (X \setminus\{x\})_{\rm an}; \C)$ by \cite[Chapter III Proposition 3.1, Chapter IV Theorem 1.1]{Ha}.
Since the homology groups of a complex quasi-projective scheme (with coefficients in $\Z$) are all finitely generated by \cite[Chapter 1 (6.10)]{Di}, the relative homology $H_i(X_{\rm an}, (X \setminus\{x\})_{\rm an}; \Z)$ is also finitely generated for all $i$. 
It then follows from the universal coefficient theorem that 
\[H^2(X_{\rm an}, (X \setminus\{x\})_{\rm an}; \C) \cong H^2(X_{\rm an}, (X \setminus\{x\})_{\rm an}; \Z) \otimes \C.\] 
Thus, $H^{n-2}_{\ba}(S)=0$ if and only if $H^2(X_{\rm an}, (X \setminus\{x\})_{\rm an}; \Z)$ is torsion. 
We will show that $H^2(X_{\rm an}, (X \setminus\{x\})_{\rm an}; \Z)$ is torsion if and only if the torsion subgroup of $\mathrm{Pic}^{\rm et-loc}(S/\ba)$ is finitely generated. 

Let $U \subset X$ be a contractible Stein open neighborhood of $x$. It follows from the excision theorem \cite[Chapter II Lemma 1.1]{BS} that for each $i \in \N$, 
\begin{align*}
H^{i+1}(X_{\rm an}, (X \setminus\{x\})_{\rm an}; \Z) & \cong H^{i+1}(U, U \setminus \{x\}; \Z) \cong H^i(U \setminus \{x\}, \Z),\\
H^{i+1}_{\{x\}}(X_{\rm an}, \sO_{X_{\rm an}}) & \cong H^{i+1}_{\{x\}}(U, \sO_{U}) \cong H^i(U \setminus \{x\}, \sO_U).
\end{align*}
First we consider the following commutative diagram: 
\[
\xymatrix{
H^0(U, \sO_U^{\times}) \ar[r] \ar[d]_{\rho^{\times}_{U, U \setminus \{x\}}} & H^0(U, \sO_U) \ar[d]^{\rho^{}_{U, U \setminus \{x\}}}\\
H^0(U \setminus\{x\}, \sO_U^{\times}) \ar[r] & H^0(U \setminus\{x\}, \sO_U),
}
\]
where the vertical maps are the restriction maps of sections and the horizontal maps are injections induced by the inclusion of sheaves $\sO_U^{\times} \hookrightarrow \sO_U$. 
Note that the restriction map $\rho_{U, U \setminus \{x\}}$ is surjective by \cite[Chapter II Theorem 3.6]{BS}, because $\depth \sO_{U, x} \ge 2$. 
Let $s \in H^0(U \setminus \{x\}, \sO_U^{\times}) \subseteq H^0(U \setminus \{x\}, \sO_U)$. 
Since $\rho_{U, U \setminus \{x\}}$ is surjective, there exists an extension $\widehat{s} \in H^0(U, \sO_U)$ of $s$ that does not vanish on $U \setminus \{x\}$.   
If $\widehat{s}$ is not in $H^0(U, \sO_U^{\times})$, then $\widehat{s}(x)= 0$, which implies that $\dim_x U \le 1$. 
This contradicts the assumption that $\depth \sO_{U, x} \ge 2$. 
Thus, $\widehat{s} \in H^0(X, \sO_X^{\times})$, that is, $\rho^{\times}_{U, U \setminus \{x\}}$ is surjective. 

Next we consider the following commutative diagram with exact rows, induced by the exponential sequence\footnote{The exponential sequence $0 \to \Z \xrightarrow{\times 2\pi i} \sO_X \xrightarrow{\mathbf{e}} \sO_X^{\times} \to 1$ exists on any (even non-reduced) complex analytic space $X$, where $\mathbf{e}$ is defined as follows: if $f \in \sO_X$ is locally represented as the restriction of a holomorphic function $\widetilde{f}$ on a complex manifold $M$, then $\mathbf{e}(f)=e^{2 \pi i \widetilde{f}}|_X$. It is easy to check its well-definedness and the exactness of the exponential sequence.}: 
\[
\xymatrix{
H^0(U, \sO_U) \ar[r] \ar[d]^{\rho_{U, U \setminus \{x\}}} & H^0(U, \sO_U^{\times}) \ar[r] \ar[d]^{\rho^{\times}_{U, U \setminus \{x\}}} & H^1(U, \Z) \ar[d] &  \\
H^0(U \setminus \{x\}, \sO_U) \ar[r] & H^0(U \setminus \{x\}, \sO_U^{\times}) \ar[r] &  H^1(U \setminus \{x\}, \Z) \ar[r] & H^1(U \setminus \{x\}, \sO_U).\\
}
\]
Since $U$ is contractible, the map $H^0(U, \sO_U) \to H^0(U, \sO_U^{\times})$ is surjective. 
Combining with the surjectivity of $\rho^{\times}_{U, U \setminus \{x\}}$, we see that $H^0(U \setminus \{x\}, \sO_U) \to H^0(U \setminus \{x\}, \sO_U^{\times})$ is also surjective, which is equivalent to saying that $H^1(U \setminus \{x\}, \Z) \to H^1(U \setminus \{x\}, \sO_U)$ is injective. 
Therefore, we obtain the following exact sequence from the exponential sequence:
\[
0 \to H^1(U \setminus \{x\}, \Z) \to H^1(U \setminus \{x\}, \sO_U) \to H^1(U \setminus \{x\}, \sO_U^{\times}) \to H^2(U \setminus \{x\}, \Z). 
\]
We then use the fact that the \'etale-local Picard group $\mathrm{Pic}^{\rm et-loc}(S/\ba)$ is isomorphic to 
the direct limit of $\mathrm{Pic}(U \setminus \{x\}, \sO_U^{\times})$ as $U \subset X$ runs through all analytic open neighborhoods of $x$, 
which follows from \cite[Chapter III Proposition 3.2]{Bo} and the proof of Proposition 3.6 in \textit{loc.~cit}. 
Taking the direct limit of the above exact sequence, we have the following exact sequence: 
\[
0 \to H^{2}(X_{\rm an}, (X \setminus\{x\})_{\rm an}; \Z) \to H^2_{\{x\}}(X_{\rm an}, \sO_{X_{\rm an}}) \to \mathrm{Pic}^{\rm et-loc}(S/\ba) \to H^{3}(X_{\rm an}, (X \setminus\{x\})_{\rm an}; \Z). 
\]
Note that $H^2_{\{x\}}(X_{\rm an}, \sO_{X_{\rm an}})$ is a $\C$-vector space. 
If $H^{2}(X_{\rm an}, (X \setminus\{x\})_{\rm an}; \Z)$ is not torsion, then $\mathrm{Pic}^{\rm et-loc}(S/\ba)$ contains $\Q/\Z$, an infinitely generated torsion group.   
Conversely, if $H^{2}(X_{\rm an}, (X \setminus\{x\})_{\rm an}; \Z)$ is torsion, then it has to be zero, 
and the torsion subgroup of $\mathrm{Pic}^{\rm et-loc}(S/\ba)$ is isomorphic to a subgroup of $H^{3}(X_{\rm an}, (X \setminus\{x\})_{\rm an}; \Z)$, which is finitely generated. Thus, we complete the proof of Theorem \ref{3rd vanishing in char 0}. 
\end{proof}

\begin{rem}\label{remark in positive char}
An analogous statement to Theorem \ref{3rd vanishing in char 0} does not hold in positive characteristic. For example, let $E$ be a supersingular elliptic curve in the projective plane $\pP^2$ over an algebraically closed field $k$ of characteristic $p>0$ and let $(R, \m)$ be the localization of the affine cone over $E \times E$ at the unique homogeneous maximal ideal. 
We easily see that $\depth R=2$ and the natural Frobenius action on $H^2_{\m}(R)$ is nilpotent, because $E$ is a supersingular elliptic curve. 
$R$ has embedding dimension $9$, so let $S=k[x_1, \dots, x_9]_{(x_1, \dots, x_9)}$ and $\ba$ be the kernel of the natural surjection $S \to R$. 
Since $H^0_{\m}(R)=H^1_\m(R)=0$ and the Frobenius action on $H^2_{\m}(R)$ is nilpotent, it follows from \cite[Corollary 3.2]{Lyu3} that $H^{9}_{\ba}(S)=H^{8}_{\ba}(S)=H^{7}_{\ba}(S)=0$, that is, $\mathrm{cd}(S, \ba)=6$, as the height of $\ba$ is equal to $6$. 

On the other hand, since $R$ is a normal isolated singularity, one has an inclusion 
\[\mathrm{Pic}(E \times E)/\Z \cong \mathrm{Cl}(R) \hookrightarrow \mathrm{Cl}(\widehat{R}) \cong \mathrm{Pic}^{\rm loc}(\widehat{R}), 
\] 
where the last isomorphism follows from \cite[Proposition 18.10]{Fo}. 
Thus, the torsion subgroup of $\mathrm{Pic}^{\rm loc}(\widehat{(\widehat{S/\ba})^{\rm sh}})=\mathrm{Pic}^{\rm loc}(\widehat{R})$ is not finitely generated, because the torsion subgroup of the Picard group of the abelian variety $E \times E$ is not finitely generated.  
\end{rem}

\begin{eg}\label{h2}
In Theorem \ref{3rd vanishing in char 0}, the assumption on $H^2_{\m}(R)$ cannot be removed. Let $S=\C[x,y,u,v,w]_{(x, y, u, v, w)}$ and $(R, \m)=S/\ba$ where $\ba=(x,y)\cap(u,v)$. Then $\depth R=2$ but $H^2_{\m}(R)$ does not have finite length. For suppose it does, then local duality would imply that $\depth R_{\fp}\geq 2$, where $\fp = (x,y,u,v)$. Since $R_{\fp}$ has disconnected punctured spectrum, this is a contradiction. 
On the other hand, 
$\mathrm{Pic}^{\rm loc}(\widehat{(\widehat{S/\ba})^{\rm sh}}) = \mathrm{Pic}^{\rm loc}(\widehat{R}) = \Z$ (the proof is the same as that of \cite[Example 28, 29]{Ko}, or see \cite[Example 5.3]{Ha3}). 
However, since $\ba$ is a square-free monomial ideal, $\cd(S,\ba) = \pd_S R = 5-2$. 
Thus, the conclusion of Theorem \ref{3rd vanishing in char 0} does not hold if we remove the condition that $H^2_{\m}(R)$ has finite length. 
\end{eg}

\begin{prop}[\textup{cf.~\cite[Lemma 8]{Ko2}}]
\label{depth3}
Let $(S, \m)$ be an $n$-dimensional regular local ring essentially of finite type over a field of characteristic zero and let $\ba$ be an ideal of $S$. 
If $\depth S/\ba \ge 3$, then $\mathrm{Pic}^{\rm loc}(\widehat{(\widehat{S/\ba})^{\rm sh}})$ is finitely generated. 
\end{prop}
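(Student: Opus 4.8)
The plan is to re-run the reduction and the key exact sequence from the proof of Theorem \ref{3rd vanishing in char 0}, now exploiting the stronger depth hypothesis. Since $\depth S/\ba \ge 3$ we have in particular $\depth S/\ba \ge 2$ and $H^2_{\m}(S/\ba)=0$, so the hypotheses of that theorem are in force and its constructions are available to us. First I would reduce to the geometric situation exactly as in the first paragraph of that proof: applying \cite[Theorem 1.3]{Du} produces a faithfully flat extension that preserves both the depth of the quotient and the relevant Picard group, reducing us to the case where $S$ is the localization of a polynomial ring at its irrelevant maximal ideal over an algebraically closed field. At that stage $\mathrm{Pic}^{\rm loc}(\widehat{(\widehat{S/\ba})^{\rm sh}}) \cong \mathrm{Pic}^{\rm et-loc}(S/\ba)$, so it suffices to prove that the latter is finitely generated.

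Next I would pass to $k=\C$. The crucial observation is that, because $\depth S/\ba \ge 3$, I may invoke part (2) of Lemma \ref{Picard Lefschetz} rather than only its torsion statement (1): this yields an honest isomorphism $\mathrm{Pic}^{\rm et-loc}(S/\ba) \cong \mathrm{Pic}^{\rm et-loc}(S_{\C}/\ba_{\C})$, so finite generation of one is equivalent to finite generation of the other. Hence it is enough to treat $S=\C[x_1,\dots,x_n]_{(x_1,\dots,x_n)}$.

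In the complex case I would use the exact sequence
\[
0 \to H^{2}(X_{\rm an}, (X\setminus\{x\})_{\rm an}; \Z) \to H^2_{\{x\}}(X_{\rm an}, \sO_{X_{\rm an}}) \to \mathrm{Pic}^{\rm et-loc}(S/\ba) \to H^{3}(X_{\rm an}, (X\setminus\{x\})_{\rm an}; \Z)
\]
constructed in the proof of Theorem \ref{3rd vanishing in char 0}. Since $\depth \sO_{X,x}=\depth S/\ba \ge 3$, the identification $H^2_{\{x\}}(X_{\rm an}, \sO_{X_{\rm an}}) \cong H^1(U\setminus\{x\}, \sO_U)$ together with the depth--vanishing for the analytic local cohomology of a punctured Stein neighborhood (via \cite[Chapter II Theorem 3.6]{BS}, which already supplied the depth $\ge 2$ surjectivity used in that proof) forces $H^2_{\{x\}}(X_{\rm an}, \sO_{X_{\rm an}})=0$. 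The exact sequence then degenerates to an injection $\mathrm{Pic}^{\rm et-loc}(S/\ba) \hookrightarrow H^{3}(X_{\rm an}, (X\setminus\{x\})_{\rm an}; \Z)$. As this last group is finitely generated (the relative singular homology of the complex quasi-projective scheme $X$ is finitely generated, and the universal coefficient theorem applies, exactly as in the proof of Theorem \ref{3rd vanishing in char 0}), so is its subgroup $\mathrm{Pic}^{\rm et-loc}(S/\ba)$, and the proposition follows.

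The step I expect to require the most care is the vanishing $H^2_{\{x\}}(X_{\rm an}, \sO_{X_{\rm an}})=0$. This rests on identifying the depth of the analytic local ring $\sO_{X,x}$ with the algebraic depth of $S/\ba$---both share the same completion, and depth is insensitive to completion---and on the precise depth-sensitivity of the local cohomology sheaves on a punctured Stein neighborhood. Everything else is a routine specialization of the already-established reduction and exact sequence; the only genuinely new input is the upgrade from the torsion isomorphism of Lemma \ref{Picard Lefschetz}(1) to the full isomorphism of Lemma \ref{Picard Lefschetz}(2), which is precisely what the hypothesis $\depth S/\ba \ge 3$ licenses.
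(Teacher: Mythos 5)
Your proposal is correct and follows essentially the same route as the paper's own proof: reduce to $S=\C[x_1,\dots,x_n]_{(x_1,\dots,x_n)}$, invoke the exponential-sequence exact sequence constructed in the proof of Theorem \ref{3rd vanishing in char 0}, note that $\depth S/\ba \ge 3$ forces $H^2_{\{x\}}(X_{\rm an},\sO_{X_{\rm an}})=0$, and conclude that the local Picard group embeds into the finitely generated group $H^{3}(X_{\rm an},(X\setminus\{x\})_{\rm an};\Z)$. Your explicit remark that the reduction to $\C$ requires the full isomorphism of Lemma \ref{Picard Lefschetz}(2) rather than only the torsion statement of part (1) is a detail the paper leaves implicit, and it is exactly the right one, since the proposition concerns finite generation of the whole group rather than of its torsion subgroup.
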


\begin{proof}
We use the same strategy as the proof of Theorem \ref{3rd vanishing in char 0}. 
We may assume that $S=\C[x_1, \dots, x_n]_{(x_1, \dots, x_n)}$. 
Let $x \in X$ be a closed point of an affine scheme $X$ of finite type over $\C$ such that $\sO_{X, x} \cong S/\ba$. 
The exponential sequence then induces the following exact sequence:
\[
H^{2}(X_{\rm an}, (X \setminus\{x\})_{\rm an}; \Z) \to H^2_{\{x\}}(X_{\rm an}, \sO_{X_{\rm an}}) \to \mathrm{Pic}^{\rm loc}(\widehat{S/\ba}) \to H^{3}(X_{\rm an}, (X \setminus\{x\})_{\rm an}; \Z).  
\]
If $\depth S/\ba \ge 3$, then we see that $H^2_{\{x\}}(X_{\rm an}, \sO_{X_{\rm an}})$ vanishes and then $\mathrm{Pic}^{\rm loc}(\widehat{S/\ba})$ is isomorphic to a subgroup of $H^{3}(X_{\rm an}, (X \setminus\{x\})_{\rm an}; \Z)$.
Thus, $\mathrm{Pic}^{\rm loc}(\widehat{S/\ba})$ is finitely generated. 
\end{proof}

\begin{cor}\label{vanishing}
Let $(S,\m)$ be an $n$-dimensional regular local ring essentially of finite type over a field. 
If $\ba$ is an ideal of $S$ such that $\depth S/\ba \ge 3$, then $\mathrm{cd}(S, \ba) \le n-3$. 
\end{cor}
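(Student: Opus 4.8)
The plan is to split the argument according to the characteristic of $S$, since the two cases are governed by genuinely different mechanisms and the heavy lifting for each has already been done in the preceding results.

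First, suppose $S$ has positive characteristic $p$. Here I would simply invoke the theorem of Peskine--Szpiro recalled in the introduction: for a regular local ring of characteristic $p>0$, the condition $\depth S/\ba \ge i$ forces $\mathrm{cd}(S,\ba) \le n-i$. Taking $i=3$ immediately yields $\mathrm{cd}(S,\ba) \le n-3$, with no further work. (Note that ``essentially of finite type over a field'' guarantees $S$ is equal characteristic, so there is no mixed-characteristic case to worry about.)

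Now suppose $S$ has characteristic zero; here I would assemble Theorem \ref{3rd vanishing in char 0} and Proposition \ref{depth3}. The first step is to check that the hypotheses of Theorem \ref{3rd vanishing in char 0} are met. Since $\depth S/\ba \ge 3 \ge 2$, the depth requirement holds; moreover $\depth S/\ba \ge 3$ forces $H^2_{\m}(S/\ba)=0$, which is trivially an $S/\m$-vector space. Thus Theorem \ref{3rd vanishing in char 0} applies and reduces the desired bound $\mathrm{cd}(S,\ba)\le n-3$ to the single assertion that the torsion subgroup of $\mathrm{Pic}^{\rm loc}(\widehat{(\widehat{S/\ba})^{\rm sh}})$ is finitely generated. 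The second step is to supply exactly this finiteness: Proposition \ref{depth3}, whose hypothesis $\depth S/\ba \ge 3$ is precisely what we are given, shows that $\mathrm{Pic}^{\rm loc}(\widehat{(\widehat{S/\ba})^{\rm sh}})$ is itself finitely generated, so \emph{a fortiori} its torsion subgroup is finitely generated. Combining the two conclusions gives $\mathrm{cd}(S,\ba)\le n-3$, completing the characteristic-zero case.

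Since all the substantive content lives in the earlier results, I do not expect a real obstacle here; the proof is essentially an assembly. The only points that require attention are the characteristic split itself --- one must remember that the cohomological criterion of Theorem \ref{3rd vanishing in char 0} is available only in equal characteristic zero, so positive characteristic has to be dispatched independently by the Frobenius methods of Peskine--Szpiro --- and the small but essential observation that the hypothesis $\depth S/\ba \ge 3$ is strong enough to simultaneously kill $H^2_{\m}(S/\ba)$ (verifying the vector-space condition in Theorem \ref{3rd vanishing in char 0}) and to trigger the finiteness in Proposition \ref{depth3}.
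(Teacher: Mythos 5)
Your proposal is correct and follows exactly the paper's own proof: the paper likewise splits by characteristic, citing Peskine--Szpiro for characteristic $p>0$ and combining Theorem \ref{3rd vanishing in char 0} with Proposition \ref{depth3} in characteristic zero. The only difference is that you spell out the routine verifications (that $\depth S/\ba \ge 3$ kills $H^2_{\m}(S/\ba)$, and that a finitely generated group has finitely generated torsion subgroup) which the paper leaves implicit.
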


\begin{proof}
The positive characteristic case follows from a result of Peskine and Szpiro \cite{PS} and the characteristic zero case does from Theorem \ref{3rd vanishing in char 0} and Proposition \ref{depth3}. 
\end{proof}

When $\depth S/\ba \ge 4$, the cohomological dimension $\mathrm{cd}(S, \ba)$ is not necessarily less than $n-3$ (see Example \ref{segre}). 
We give a necessary and sufficient condition for $\mathrm{cd}(S, \ba)$ to be less than $n-3$ in terms of the local Picard group of the completion of the strict henselization of the completion of $S/\ba$.  

\begin{thm}\label{picard group}
Let $(S,\m)$ be an $n$-dimensional regular local ring essentially of finite type over a field of characteristic zero and let $\ba$ be an ideal of $S$ such that $\depth S/\ba \ge 4$. 
Then $\mathrm{cd}(S, \ba) \le n-4$ if and only if $\mathrm{Pic}^{\rm loc}(\widehat{(\widehat{S/\ba})^{\rm sh}})$ is torsion. 
\end{thm}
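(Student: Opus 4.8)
The plan is to run the argument of Theorem \ref{3rd vanishing in char 0} one cohomological degree higher, exploiting that the stronger hypothesis $\depth S/\ba \ge 4$ both improves the a priori bound on $\cd(S,\ba)$ and forces two consecutive analytic local cohomology groups to vanish. First I would carry out exactly the same reductions as in the proof of Theorem \ref{3rd vanishing in char 0}: using the descent of \cite{Du}, passing to $\overline{k}$, adjoining the coefficients of a set of generators of $\ba$ to $\Q$ and embedding into $\C$, and invoking the Lefschetz principle for \'etale-local Picard groups. Here I would use part (2) of Lemma \ref{Picard Lefschetz} rather than part (1): since $\depth S/\ba \ge 4 \ge 3$ (and $H^2_{\m}(S/\ba)=0$ is automatically a finite-dimensional vector space), I obtain a genuine isomorphism $\mathrm{Pic}^{\rm et-loc}(A_{\m}) \cong \mathrm{Pic}^{\rm et-loc}(B_{\mathfrak{n}})$, so the property of being torsion is preserved under the reduction, and I reduce to $S=\C[x_1,\dots,x_n]_{(x_1,\dots,x_n)}$ with $\mathrm{Pic}^{\rm loc}(\widehat{(\widehat{S/\ba})^{\rm sh}}) \cong \mathrm{Pic}^{\rm et-loc}(S/\ba)$. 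Since $\depth S/\ba \ge 4 \ge 3$, Corollary \ref{vanishing} already gives $\cd(S,\ba)\le n-3$, so $\cd(S,\ba)\le n-4$ is equivalent to $H^{n-3}_{\ba}(S)=0$.

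Next I would prove the analogue of the Claim in Theorem \ref{3rd vanishing in char 0}, namely that $H^{n-3}_{\ba}(S)$ is supported only at $\m$. Assuming $S$ complete, local duality together with $\depth S/\ba \ge 4$ gives $\mathrm{Ext}^{j}_S(S/\ba, S)=0$ for $j \in \{n-3, n-2, n-1, n\}$; localizing at a prime $\fp \ne \m$ of height $h$ and dualizing back on the regular local ring $S_{\fp}$ shows $\depth (S/\ba)_{\fp} \ge 3, 2, 1$ when $h=n-1, n-2, n-3$ respectively (heights $\le n-4$ being automatic by Grothendieck vanishing). Feeding these into the hierarchy of vanishing theorems---Corollary \ref{vanishing} for depth $3$, the second vanishing theorem \cite[Corollary 2.11]{Og} for depth $2$, and Hartshorne--Lichtenbaum \cite{Ha2} for depth $1$---yields in every case $\cd(S_{\fp}, \ba S_{\fp}) \le n-4 < n-3$, hence $H^{n-3}_{\ba S_{\fp}}(S_{\fp})=0$. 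With the support claim in hand, \cite[Theorem 2.8]{Og} together with \cite[Chapter III Proposition 3.1, Chapter IV Theorem 1.1]{Ha} identifies the vanishing of $H^{n-3}_{\ba}(S)$ with the vanishing of the relative singular cohomology $H^3(X_{\rm an}, (X\setminus\{x\})_{\rm an}; \C)$, exactly as in degree $2$ before.

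Finally I would extend the exponential-sequence computation one term further. Taking the direct limit over contractible Stein neighborhoods $U$ of $x$ and using excision and the identifications $H^i(U\setminus\{x\}, \sO_U) \cong H^{i+1}_{\{x\}}(X_{\rm an}, \sO_{X_{\rm an}})$ and $H^i(U\setminus\{x\},\Z) \cong H^{i+1}(X_{\rm an}, (X\setminus\{x\})_{\rm an};\Z)$, the exponential sequence yields the exact sequence
\[
H^2_{\{x\}}(X_{\rm an}, \sO_{X_{\rm an}}) \to \mathrm{Pic}^{\rm et-loc}(S/\ba) \to H^3(X_{\rm an}, (X\setminus\{x\})_{\rm an};\Z) \to H^3_{\{x\}}(X_{\rm an}, \sO_{X_{\rm an}}).
\]
Since $\depth S/\ba \ge 4$, both $H^2_{\{x\}}(X_{\rm an}, \sO_{X_{\rm an}})$ and $H^3_{\{x\}}(X_{\rm an}, \sO_{X_{\rm an}})$ vanish, so $\mathrm{Pic}^{\rm et-loc}(S/\ba) \cong H^3(X_{\rm an}, (X\setminus\{x\})_{\rm an};\Z)$, a finitely generated abelian group. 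Therefore $\mathrm{Pic}^{\rm et-loc}(S/\ba)$ is torsion if and only if $H^3(X_{\rm an}, (X\setminus\{x\})_{\rm an};\Z)$ is torsion, if and only if (by the universal coefficient theorem) $H^3(X_{\rm an}, (X\setminus\{x\})_{\rm an};\C)=0$, if and only if $H^{n-3}_{\ba}(S)=0$, if and only if $\cd(S,\ba)\le n-4$, which is the desired equivalence.

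I expect the main obstacle to be the support claim for $H^{n-3}_{\ba}(S)$: unlike the degree-$2$ case, where only primes of height $n-1$ needed attention, here primes of heights $n-1$, $n-2$ and $n-3$ must all be handled, and the argument requires both the correct local-duality depth estimates at each height and the full strength of Corollary \ref{vanishing} (the depth-$3$ vanishing just established) to dispose of the height $n-1$ primes. The rest is a degree-shifted reprise of the proof of Theorem \ref{3rd vanishing in char 0}, the only genuinely new simplification being that $\depth \ge 4$ annihilates the two consecutive groups $H^2_{\{x\}}(\sO_{X_{\rm an}})$ and $H^3_{\{x\}}(\sO_{X_{\rm an}})$, turning the four-term sequence of Theorem \ref{3rd vanishing in char 0} into a clean isomorphism.
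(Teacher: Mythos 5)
Your proposal is correct and follows essentially the same route as the paper's own proof: the same reduction to $S=\C[x_1,\dots,x_n]_{(x_1,\dots,x_n)}$ via Lemma \ref{Picard Lefschetz}, the same support claim for $H^{n-3}_{\ba}(S)$, the identification of the vanishing of $H^{n-3}_{\ba}(S)$ with the torsionness of $H^3(X_{\rm an},(X\setminus\{x\})_{\rm an};\Z)$, and the exponential sequence argument in which $\depth \ge 4$ kills $H^i(U\setminus\{x\},\sO_U)\cong H^{i+1}_{\{x\}}(U,\sO_U)$ for $i=1,2$, yielding $\mathrm{Pic}^{\rm et-loc}(S/\ba)\cong H^3(X_{\rm an},(X\setminus\{x\})_{\rm an};\Z)$. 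One blemish in your support claim should be fixed: you should not pass to the completion of $S$ before invoking Corollary \ref{vanishing}, because a localization $S_{\fp}$ of a complete regular local ring is in general not essentially of finite type over a field, and in that generality the depth-$3$ vanishing is precisely the open Conjecture \ref{varbaro conj} (this issue does not affect your uses of Hartshorne--Lichtenbaum and the second vanishing theorem, which hold for all regular local rings containing a field). The fix costs nothing: stay with $S$ essentially of finite type over $\C$; the local-duality step only needs vanishing statements, which are insensitive to completion since the relevant Ext modules are finitely generated, and then every $S_{\fp}$ is again essentially of finite type over a field, so Corollary \ref{vanishing} applies exactly as you intend.
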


\begin{proof}
We use the same strategy as the proof of Theorem \ref{3rd vanishing in char 0} again. 
We may assume that $S=\C[x_1, \dots, x_n]_{(x_1, \dots, x_n)}$. 
Note that $H^{i}_{\ba}(S)=0$ for all $i \ge n-2$ by Corollary \ref{vanishing}. 
We also see from an argument analogous to Claim in the proof of Theorem \ref{3rd vanishing in char 0} that $H^{n-3}_{\ba}(S)$ is supported only at the maximal ideal $\m$. 
Let $x \in X$ be a closed point of an affine scheme $X$ of finite type over $\C$ such that $\sO_{X, x} \cong S/\ba$. 
The vanishing of $H^{n-3}_{\ba}(S)$ is equivalent to saying that the relative singular cohomology $H^3(X_{\rm an}, (X \setminus\{x\})_{\rm an}; \Z)$ is torsion.  
We will show that $H^3(X_{\rm an}, (X \setminus\{x\})_{\rm an}; \Z) \cong \mathrm{Pic}^{\rm et-loc}(S/\ba)$. 

Let $U \subset X$ be a contractible Stein open neighborhood of $x$. 
It follows from the excision theorem and the contractibility of $U$ that 
\[H^3(X_{\rm an}, (X \setminus\{x\})_{\rm an}; \Z) \cong H^3(U, U \setminus \{x\}; \Z) \cong H^2(U \setminus \{x\}, \Z).\]
Also, since $\depth \sO_{U, x} \ge 4$ and $U$ is Stein, one has $H^i(U \setminus \{x\}, \sO_U) \cong H^{i+1}_{\{x\}}(U, \sO_U)=0$ for $i=1, 2$. 
Then the exponential sequence induces the following exact sequence: 
\[
0=H^1(U \setminus \{x\}, \sO_U) \to H^1(U \setminus \{x\}, \sO_U^{\times}) \to H^2(U \setminus \{x\}, \Z) \to H^2(U \setminus \{x\}, \sO_U)=0.
\]
In other words, $H^1(U \setminus \{x\}, \sO_U^{\times}) \cong H^2(U \setminus \{x\}, \Z)$. 
Thus, we can conclude that 
\[\mathrm{Pic}^{\rm et-loc}(S/\ba) \cong  \varinjlim_{U \ni x} H^1(U \setminus \{x\}, \sO_U^{\times}) \cong \varinjlim_{U \ni x} H^2(U \setminus \{x\}, \Z) \cong H^3(X_{\rm an}, (X \setminus\{x\})_{\rm an}; \Z).\] 
\end{proof}

The following corollary is immediate from Corollary \ref{vanishing} and Theorem \ref{picard group}. 
\begin{cor}\label{n-3}
Let $(S,\m)$ be an $n$-dimensional regular local ring essentially of finite type over a field of characteristic zero and let $\ba$ be an ideal of $S$ such that $\depth S/\ba \ge 4$. Then $\cd(S,\ba)=n-3$ if and only if $\mathrm{Pic}^{\rm loc}(\widehat{(\widehat{S/\ba})^{\rm sh}})$ is not torsion.
\end{cor}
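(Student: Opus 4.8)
The plan is to read the equivalence off directly from the two results already in place, viewing $\cd(S,\ba)$ as an integer that the established inequalities squeeze into a single remaining value. First I would invoke Corollary \ref{vanishing}: since the standing hypothesis gives $\depth S/\ba \ge 4 \ge 3$, that corollary applies and yields the one-sided bound $\cd(S,\ba) \le n-3$. This upper bound is what makes the dichotomy clean, so it is the natural first move.

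With $\cd(S,\ba) \le n-3$ in hand, the only two mutually exclusive possibilities are $\cd(S,\ba) \le n-4$ and $\cd(S,\ba) = n-3$; equivalently, $\cd(S,\ba) = n-3$ holds if and only if $\cd(S,\ba) \not\le n-4$. Next I would apply Theorem \ref{picard group}, whose hypothesis $\depth S/\ba \ge 4$ is exactly the standing assumption. It asserts that $\cd(S,\ba) \le n-4$ if and only if $\mathrm{Pic}^{\rm loc}(\widehat{(\widehat{S/\ba})^{\rm sh}})$ is torsion. Negating both sides of this biconditional gives: $\cd(S,\ba) \not\le n-4$ if and only if $\mathrm{Pic}^{\rm loc}(\widehat{(\widehat{S/\ba})^{\rm sh}})$ is not torsion.

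Combining the two equivalences then delivers the claim: $\cd(S,\ba) = n-3$ precisely when $\mathrm{Pic}^{\rm loc}(\widehat{(\widehat{S/\ba})^{\rm sh}})$ fails to be torsion. There is essentially no obstacle to overcome here, since all the substantive content has already been carried by Corollary \ref{vanishing} and Theorem \ref{picard group}; the only thing to verify is that the two inequalities interlock so as to leave exactly the single value $n-3$ outside the reach of the torsion criterion. The one point I would state carefully is the role of the bound from Corollary \ref{vanishing}: it is precisely what upgrades the ``$\le n-4$ iff torsion'' statement into an ``equal to $n-3$ iff not torsion'' statement, since without that upper bound $\cd(S,\ba)$ could a priori exceed $n-3$ and the clean dichotomy would break down.
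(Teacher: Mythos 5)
Your proposal is correct and matches the paper's own reasoning exactly: the paper states that Corollary \ref{n-3} is immediate from Corollary \ref{vanishing} (which gives the bound $\cd(S,\ba)\le n-3$) and Theorem \ref{picard group} (which characterizes $\cd(S,\ba)\le n-4$ by torsion of the local Picard group). Your careful remark about why the upper bound from Corollary \ref{vanishing} is what turns the biconditional into the stated dichotomy is precisely the (implicit) content of the paper's one-line proof.
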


\begin{eg}\label{segre}
Let $A$ and $B$ be standard graded Cohen-Macaulay normal domains over an algebraically closed field of characteristic zero. 
Suppose that $\dim A\geq 3$, $\dim B\geq 2$, and the $a$-invariants of $A$ and $B$ are both negative. 
Let $R=A\# B$ be the Segre product of $A$ and $B$. 
We write $R=T/\ba$ using the standard embedding, 
where $T$ is a standard graded polynomial ring with unique homogeneous maximal ideal $\m$. 
Let $S=T_{\m}$.  
Then $\cd(S,\ba) = \dim S-3$. 

To prove this, we just need to verify the conditions of Corollary \ref{n-3}. We know that $\dim R=\dim A+\dim B-1\geq 4$ and $R$ is a Cohen-Macaulay normal domain (see \cite[Theorem 4.2.3]{GW}). Let $U=\Proj A$, $V=\Proj B$ and $X=\Proj R$. Then $\Pic(X)= \Pic(U)\times \Pic(V)$ has rank at least $2$. 
For this we need the assumption that  $A$ has depth at least $3$ (\cite[Exercise III.12.6]{Ha1}). 
Since $R$ is normal, there exists the following exact sequence (see \cite[Exercise II.6.3]{Ha1})
\[0 \to \Z \to \Cl(X) \to \Cl(\Spec R \setminus \{\m R\}) \to 0,\]
which induces an exact sequence of Picard groups
\[0 \to \Z \to \Pic(X) \to \Pic(\Spec R \setminus \{\m R\}) \to 0.\]
It then follows that $\mathrm{Pic}^{\rm loc}(\widehat{(\widehat{S/\ba})^{\rm sh}})=\mathrm{Pic}^{\rm loc}(\widehat{S/\ba})$ has positive rank. Thus $\cd(S,\ba) = \dim S-3$. 

For instance, when $A$ and $B$ are polynomial rings of dimension $m,n$, respectively, we see that $\ba$ is generated by $2 \times 2$ minors of the $m\times n$ matrix of indeterminates, and it is well-known that in such a case $\cd(S,\ba)=mn-3$ (see for example \cite[Remark 7.12]{BrVe}). 
\end{eg}


\section{Applications}

In this section, we give a number of applications of the results in $\S \ref{vanishing section}$. First, we recall the definition of Lyubeznik numbers. 

\begin{defn}[\textup{\cite[Definition 4.1]{Lyu}}]\label{Lyubeznik def}
 Let $R$ be a Noetherian local ring that admits a surjection from an $n$-dimensional regular local ring $(S, \m)$ of equal characteristic. 
 Let $\ba \subset S$ be the kernel of the surjection and let $k = S/\m$ be the residue field of $S$. 
 Then for each $0 \le i, j \le n$, the \textit{Lyubeznik number} $\lambda_{i,j}(R)$ is defined by 
 \[
 \lambda_{i,j}(R)=\dim_k(\mathrm{Ext}^i_S(k,H^{n-j}_{\ba}(S))).
 \] 
It is known that the $\lambda_{i,j}(R)$ are all finite and independent of the choice of the surjection $S \to R$. 
\end{defn}

As an application of Corollary \ref{vanishing}, we obtain vanishing results of Lyubeznik numbers. 

\begin{prop}\label{lyubeznik number}
Let $R$ be a local ring essentially of finite type over a field. 
Then for all $j <\depth R$, one has 
\[\lambda_{j-2,j}(R)=\lambda_{j-1,j}(R)=\lambda_{j,j}(R)=0.\]
\end{prop}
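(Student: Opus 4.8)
Throughout set $R=S/\ba$ and recall $\lambda_{i,j}(R)=\dim_k\mathrm{Ext}^i_S(k,H^{n-j}_{\ba}(S))$; after completing $S$ these numbers are unchanged. The plan is to reduce everything to a vanishing statement about the local cohomology modules $H^{n-j}_{\ba}(S)$ and their Bass numbers at $\m$. In characteristic $p>0$ the statement is immediate: Peskine--Szpiro give the sharp bound $\cd(S,\ba)\le n-\depth R$, so $H^{n-j}_{\ba}(S)=0$ for every $j<\depth R$ and hence \emph{all} $\lambda_{i,j}(R)$ with $j<\depth R$ vanish, not merely the three claimed. The whole content therefore lies in equal characteristic zero, and there I would first reduce to $S=\C[x_1,\dots,x_n]_{(x_1,\dots,x_n)}$ exactly as in the proof of Theorem \ref{3rd vanishing in char 0}.

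In characteristic zero I would split off the easy range $j\le 2$: since $j<\depth R$ forces $\depth R\ge 3$, the Hartshorne--Lichtenbaum theorem, the vanishing results of Ogus and Peskine--Szpiro, and Corollary \ref{vanishing} give $\cd(S,\ba)\le n-3$, so $H^{n-j}_{\ba}(S)=0$ and the whole column $j$ vanishes. The real difficulty is $3\le j<\depth R$ (so $\depth R\ge 4$), where $H^{n-j}_{\ba}(S)$ need not vanish — Example \ref{segre} has $\cd(S,\ba)=n-3$ — and I must show only that its \emph{top three} Bass numbers at $\m$ vanish. Here I would invoke Lyubeznik's isomorphism $H^p_{\m}(H^q_{\ba}(S))\cong E(k)^{\lambda_{p,n-q}(R)}$, which turns the goal into $H^i_{\m}(H^{n-j}_{\ba}(S))=0$ for $i\in\{j-2,j-1,j\}$. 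The spectral sequence $H^p_{\m}(H^q_{\ba}(S))\Rightarrow H^{p+q}_{\m}(S)$ recovers the standard relations among the $\lambda_{i,j}$, but its abutment $H^{\bullet}_{\m}(S)$ is insensitive to $R$, so it cannot by itself detect the depth; the depth must be fed in from outside.

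The mechanism I would use to bring in $\depth R$ is the analytic comparison already exploited in Theorem \ref{3rd vanishing in char 0}. With $X$ the germ cut out by $\ba$, $x$ its singular point, and $U^{\ast}=U\setminus\{x\}$ a punctured contractible Stein neighborhood, one has the coherent identification $H^i(U^{\ast},\sO_U)\cong H^{i+1}_{\m}(R)$, so $\depth R\ge j+1$ yields $H^i(U^{\ast},\sO_U)=0$ for $1\le i\le\depth R-2$. Plugging this vanishing into the exponential sequence on $U^{\ast}$, and using the de Rham/local-cohomology dictionary (Ogus, Hartshorne) that ties $H^{n-j}_{\ba}(S)$ to the link cohomology $H^{j-1}(U^{\ast};\C)$, I would match the three near-diagonal Bass numbers $\lambda_{j,j},\lambda_{j-1,j},\lambda_{j-2,j}$ against precisely the graded pieces annihilated by the depth-forced coherent vanishing. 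I expect the main obstacle to be exactly this identification: determining which part of the link cohomology $H^{j-1}(U^{\ast};\C)$ each Bass number $\lambda_{i,j}(R)$ computes, and checking that the three parts controlled — through the exponential sequence — by the depth-governed coherent cohomology $H^{\bullet}(U^{\ast},\sO_U)$ are precisely the top three entries of column $j$. A purely cohomological-dimension or support-dimension estimate for $H^{n-j}_{\ba}(S)$ will not suffice, since bounding $\dim\Supp H^{n-j}_{\ba}(S)$ by $j-3$ would require a Serre-type condition strictly stronger than $\depth R>j$.
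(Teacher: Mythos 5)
Your characteristic-$p$ reduction, the reduction to $S=\C[x_1,\dots,x_n]_{(x_1,\dots,x_n)}$, the disposal of the columns $j\le 2$, and the reformulation of the goal as $H^i_{\m}(H^{n-j}_{\ba}(S))=0$ for $i\in\{j-2,j-1,j\}$ are all correct. But the heart of the proposition --- the case $3\le j<\depth R$ --- is not proved: you sketch an analytic strategy (exponential sequence, link cohomology of a punctured Stein neighborhood) and you yourself flag its key step, namely deciding which part of $H^{j-1}(U^{\ast};\C)$ each Bass number $\lambda_{i,j}(R)$ computes, as an unresolved obstacle. No such identification is available from the results you cite, so the proposal has a genuine gap exactly where the content of the statement lies.

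Worse, your closing assertion --- that a support-dimension estimate ``will not suffice'' because bounding $\dim\Supp H^{n-j}_{\ba}(S)$ by $j-3$ ``would require a Serre-type condition strictly stronger than $\depth R>j$'' --- is false, and it dismisses precisely the argument that works and that the paper uses. What you are missing is that although depth does not localize, its local-duality reformulation does. From $\depth R>j\ge 2$ one gets $H^{j-2}_{\m}(R)=H^{j-1}_{\m}(R)=H^{j}_{\m}(R)=0$, hence by local duality over $S$ the vanishing of $\mathrm{Ext}^{n-j}_S(R,S)$, $\mathrm{Ext}^{n-j+1}_S(R,S)$ and $\mathrm{Ext}^{n-j+2}_S(R,S)$. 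Ext localizes, so for every prime $\fp\supseteq\ba$ of height $n-j+2$ these three modules vanish over $S_{\fp}$, and local duality over the $(n-j+2)$-dimensional regular local ring $S_{\fp}$ converts this back into $\depth S_{\fp}/\ba S_{\fp}\ge 3$; Corollary \ref{vanishing} applied to $S_{\fp}$ then gives $H^{n-j}_{\ba S_{\fp}}(S_{\fp})=0$. Primes of height $n-j$ and $n-j+1$ are handled the same way using the Hartshorne--Lichtenbaum and second vanishing theorems, and smaller heights are trivial, so $\dim\Supp H^{n-j}_{\ba}(S)\le j-3$. Since by \cite[Corollary 3.6 (b)]{Lyu} the injective dimension of $H^{n-j}_{\ba}(S)$ is bounded by the dimension of its support, every Bass number $\mathrm{Ext}^i_S(k,H^{n-j}_{\ba}(S))$ with $i\ge j-2$ vanishes, which is the assertion. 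Note that this route needs no reduction to $\C$, no analytic input, and no Lyubeznik isomorphism $H^p_{\m}(H^q_{\ba}(S))\cong E_S(S/\m)^{\lambda_{p,n-q}(R)}$: it works uniformly in both characteristics once Corollary \ref{vanishing} is in hand.
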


\begin{proof}
Let $S$ be an $n$-dimensional regular local ring essentially of finite type over a field and $\ba$ be an ideal of $S$ such that $S/\ba \cong R$. 
Since the injective dimension of $H^{n-j}_{\ba}(S)$ is less than or equal to the dimension of the support of $H^{n-j}_{\ba}(S)$ by \cite[Corollary 3.6 (b)]{Lyu}, we will show that 
\[
\dim \mathrm{Supp}(H^{n-j}_{\ba}(S)) \le j-3
\]
for all $0 \le j<\depth S/\ba$ (here we use the convention that the dimension of the empty set is $-\infty$). 
To check this, it is enough to show that $H^{n-j}_{\ba S_{\fp}}(S_{\fp})=0$ for all $0 \le j <\depth S/\ba$ and for all prime ideals $\fp$ of height $h \le n-j+2$. 
If $j=0$, then $H^n_{\ba}(S)=0$ by the Lichtenbaum-Hartshorne vanishing theorem \cite{Ha2}. 
If $j=1$, then $H^{n-1}_{\ba}(S)=0$ by the second vanishing theorem \cite{Og}, \cite{PS}.  
Thus, we may assume that $j \ge 2$ and $h=n-j+2$. 

Since $\depth S/\ba > j \ge 2$, we have $H^j_{\m}(S/\ba)=H^{j-1}_{\m}(S/\ba)=H^{j-2}_{\m}(S/\ba)=0$. 
Local duality over $S$ yields that $\mathrm{Ext}^{n-j}_{S}(S/\ba, S)=\mathrm{Ext}^{n-j+1}_{S}(S/\ba, S)=\mathrm{Ext}^{n-j+2}_{S}(S/\ba, S)=0$.  
In particular, 
$\mathrm{Ext}^{n-j}_{S_{\fp}}(S_{\fp}/\ba S_{\fp}, S_{\fp})=\mathrm{Ext}^{n-j+1}_{S_{\fp}}(S_{\fp}/\ba S_{\fp}, S_{\fp})=\mathrm{Ext}^{n-j+2}_{S_{\fp}}(S_{\fp}/\ba S_{\fp}, S_{\fp})=0$.
Then local duality over the $(n-j+2)$-dimensional regular local ring $S_{\fp}$ yields that 
\[
H^2_{\fp S_{\fp}}(S_{\fp}/\ba S_{\fp})=H^1_{\fp S_{\fp}}(S_{\fp}/\ba S_{\fp})=H^0_{\fp S_{\fp}}(S_{\fp}/\ba S_{\fp})=0,
\] 
that is, $\depth S_{\fp}/\ba S_{\fp} \ge 3$. 
We, therefore, conclude from Corollary \ref{vanishing} that $H^{n-j}_{\ba S_{\fp}}(S_{\fp})=0$.
\end{proof}

The following proposition comes from a discussion with Matteo Varbaro, whom we thank.
\begin{prop}\label{S3}
Let $R$ be a $d$-dimensional local ring essentially of finite type over a field.
If $R$ satisfies Serre's condition $(S_3)$, then $\lambda_{d-1, d}(R)=0$. 
\end{prop}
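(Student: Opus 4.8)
The plan is to isolate the single Lyubeznik number
\[
\lambda_{d-1,d}(R)=\dim_k\mathrm{Ext}^{d-1}_S\bigl(k,H^{n-d}_\ba(S)\bigr)
\]
(where $R=S/\ba$, $\dim R=d$, and $c:=\height\ba=n-d$, so that $H^{n-d}_\ba(S)=H^c_\ba(S)$ is the lowest nonvanishing local cohomology module of $\ba$) and to reduce its vanishing, by means of a spectral sequence, to the vanishing of the numbers $\lambda_{j-2,j}(R)$ with $j<d$, which can be controlled by a support estimate as in Proposition \ref{lyubeznik number}. Since Lyubeznik numbers are invariant under completion, I would first pass to a complete regular local $S$.

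The engine is the Grothendieck spectral sequence of the composite functor $\mathrm{Hom}_S(k,-)\circ\Gamma_\ba$ evaluated at $S$,
\[
E_2^{p,q}=\mathrm{Ext}^p_S\bigl(k,H^q_\ba(S)\bigr)\ \Longrightarrow\ \mathbb{H}^{p+q}\bigl(R\mathrm{Hom}_S(k,R\Gamma_\ba(S))\bigr),
\]
so that $\dim_k E_2^{p,q}=\lambda_{p,\,n-q}(R)$. Because $k$ is $\m$-power torsion one has $R\mathrm{Hom}_S(k,R\Gamma_\ba(S))\simeq R\mathrm{Hom}_S(k,R\Gamma_\m R\Gamma_\ba(S))\simeq R\mathrm{Hom}_S(k,R\Gamma_\m(S))$, and since $S$ is regular of dimension $n$ we have $R\Gamma_\m(S)\simeq E_S(k)[-n]$, whence the abutment is $R\mathrm{Hom}_S(k,E_S(k)[-n])\simeq k[-n]$. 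Thus the spectral sequence converges to $k$ concentrated in total degree $n$ and vanishes in every other total degree.

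Now the entry computing our number, $E_2^{d-1,c}$, lies in total degree $(d-1)+c=n-1\neq n$, so $E_\infty^{d-1,c}=0$. The outgoing differentials $d_r\colon E_r^{d-1,c}\to E_r^{d-1+r,c-r+1}$ ($r\ge2$) all vanish, since their targets have second index $c-r+1<c$ and $H^{<c}_\ba(S)=0$. Hence $E_2^{d-1,c}$ must be exhausted by the images of the incoming differentials coming from the entries $E_r^{d-1-r,c+r-1}$, and therefore
\[
\lambda_{d-1,d}(R)=\dim_k E_2^{d-1,c}\le\sum_{r\ge2}\dim_k E_2^{d-1-r,c+r-1}=\sum_{2\le j\le d-1}\lambda_{j-2,j}(R),
\]
where $j=d+1-r$. (For $d\le2$ the sum is empty and we are done immediately.)

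It remains to show $\lambda_{j-2,j}(R)=0$ for every $j<d$, and this is the step where $(S_3)$ is used and where the main difficulty lies. By Lyubeznik's inequality $\mathrm{injdim}_S H^{n-j}_\ba(S)\le\dim\Supp H^{n-j}_\ba(S)$ it suffices to prove $\dim\Supp H^{n-j}_\ba(S)\le j-3$, i.e. that $H^{n-j}_{\ba S_\fp}(S_\fp)=0$ whenever $\fp\supseteq\ba$ has $\height\fp\le n-j+2$. The crucial preliminary fact is that $R$ is equidimensional: as $R$ is a quotient of a regular local ring and satisfies $(S_2)$, a minimal prime of dimension $t<d$ would make the deficiency module $\mathrm{Ext}^{n-t}_S(R,S)$ nonzero at that prime, forcing $\dim\mathrm{Ext}^{n-t}_S(R,S)\ge t$ and contradicting the standard (Schenzel) characterization $\dim\mathrm{Ext}^{n-i}_S(R,S)\le i-2$ $(i<d)$ of $(S_2)$; hence $\ba$ has pure height $c$. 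Consequently, for $\fp$ as above, $S_\fp$ being Cohen--Macaulay gives $\dim R_\fp=\height\fp-c\le d-j+2$. If $\dim R_\fp\ge3$ then $\depth R_\fp\ge3$ by $(S_3)$, and Corollary \ref{vanishing} applied to $S_\fp$ yields $\cd(S_\fp,\ba S_\fp)\le\height\fp-3\le n-j-1<n-j$; if $\dim R_\fp\le2$ then $(S_3)$ makes $R_\fp$ Cohen--Macaulay, so $H^i_{\ba S_\fp}(S_\fp)$ is concentrated in degree $\height(\ba S_\fp)=c=n-d\neq n-j$. In either case $H^{n-j}_{\ba S_\fp}(S_\fp)=0$, giving the support bound and hence $\lambda_{j-2,j}(R)=0$. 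Feeding this into the spectral sequence estimate completes the proof that $\lambda_{d-1,d}(R)=0$. The principal obstacle is precisely the support estimate for $j<d$: unlike in Proposition \ref{lyubeznik number}, the global hypothesis only guarantees $\depth R\ge3$, so one must exploit the local depth conditions from $(S_3)$ together with the equidimensionality forced by $(S_2)$, and the spectral sequence is what transports this information up to the top row $j=d$, where the naive support bound fails.
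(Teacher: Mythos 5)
Your proof is correct and is essentially the paper's proof in Matlis-dual clothing: the paper runs the Grothendieck spectral sequence $H^p_{\m}(H^q_{\ba}(S)) \Rightarrow H^{p+q}_{\m}(S)$, whose $E_2$-entries are direct sums of copies of $E_S(S/\m)$ counted by $\lambda_{p,n-q}(R)$, and kills the entry computing $\lambda_{d-1,d}(R)$ by exactly your argument --- outgoing differentials die because $H^{q}_{\ba}(S)=0$ for $q<c$, incoming sources die via the same support bound $\dim \Supp H^{n-j}_{\ba}(S)\le j-3$ for $j<d$ obtained by localizing and applying $(S_3)$ together with Corollary \ref{vanishing}, and the abutment vanishes in total degree $n-1$. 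The one point you should repair is the justification in the case $\dim R_{\fp}\le 2$: Cohen--Macaulayness alone does not force $H^i_{\ba S_{\fp}}(S_{\fp})$ to be concentrated in degree $\height(\ba S_{\fp})$ in characteristic zero (Segre products as in Example \ref{segre} are Cohen--Macaulay counterexamples); what is true, and what the paper invokes, is that $\depth R_{\fp}=\dim R_{\fp}\le 2$ allows the Hartshorne--Lichtenbaum theorem (when $\dim R_{\fp}=1$) and the second vanishing theorem (when $\dim R_{\fp}=2$) to give $\cd(S_{\fp},\ba S_{\fp})\le \height \fp-\dim R_{\fp}=c<n-j$.
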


\begin{proof}
Let $(S,\m)$ be an $n$-dimensional regular local ring essentially of finite type over a field and $\ba$ be an ideal of $S$ such that $S/\ba \cong R$. 
We use the Grothendieck spectral sequence 
\[
E^{p, q}_2=H^p_{\m}(H^{q}_{\ba}(S)) \Longrightarrow E^{p+q}=H^{p+q}_\m (S). 
\] 
Since $E^{d-1, n-d}_2$ is an injective $S$-module by \cite[Corollary 3.6 (a)]{Lyu}, it is isomorphic to the direct sum of $\lambda_{d-1,d}(R)$ copies of the injective hull $E_S(S/\m)$ of the residue field of $S$. 
In particular, the vanishing of $\lambda_{d-1,d}(R)$ is equivalent to saying that $E^{d-1, n-d}_2=0$. 

\begin{claim}
$E^{d-r-1, n-d+r-1}_2=0$ for all $r \ge 2$. 
\end{claim}
\begin{proof}[Proof of Claim]
We may assume that $d \ge 3$ and $r \le d+1$. 
Since the injective dimension of $H^{n-d+r-1}_{\ba}(S)$ is less than or equal to the dimension of the support of $H^{n-d+r-1}_{\ba}(S)$ by \cite[Corollary 3.6 (b)]{Lyu}, it suffices to show that 
\[\dim \mathrm{Supp}(H^{n-d+r-1}_{\ba}(S)) \le d-r-2.\] 
In other words, we will show that $H^{n-d+r-1}_{\ba S_{\fp}}(S_{\fp})=0$ for all prime ideals $\fp$ of height $h \le n-d+r+1$. 
Note that $H^n_{\ba}(S)=H^{n-1}_{\ba}(S)=0$ by the Lichtenbaum-Hartshorne vanishing theorem \cite{Ha2} and the second vanishing theorem \cite{Og}, \cite{PS}.  
Therefore, we only consider the case where $r \le d-1$ and $h=n-d+r+1$. 
In this case, $\depth S_{\fp}/ \ba S_{\fp} \ge 3$ by assumption. 
Applying Corollary \ref{vanishing} to the $(n-d+r+1)$-dimensional local ring $S_{\fp}$, we see that $H^{n-d+r-1}_{\ba S_{\fp}}(S_{\fp})=0$. 
\end{proof}
On the other hand, it is easy to see that $E^{d+r-1, n-d-r+1}_2=0$ for all $r \ge2$. Combining this with the above claim, we conclude that 
\[E^{d-1, n-d}_2 \cong E^{d-1, n-d}_3 \cong \cdots \cong E^{d-1, n-d}_{\infty}=0,\]
where the last equality follows from the fact that $E^{n-1}=H^{n-1}_\m(S)=0$. 
\end{proof}

\begin{rem}
Let $R$ be an excellent local ring containing a field (but not necessarily essentially of finite type over a field). If $R$ is an isolated singularity,  
then making use of Artin's algebraization theorem \cite[Theorem 3.8]{Ar}, we still have the same vanishing results as Propositions \ref{lyubeznik number} and \ref{S3}.  
\end{rem}

Next, we prove an extension of a recent result by Katzman-Lyubeznik-Zhang \cite{KLZ} which in turn extended the classical result of Hartshorne on connectedness of the punctured spectrum. 

\begin{defn}[\textup{\cite[Definition 1.1]{KLZ}}]
Let $(R, \m)$ be a Noetherian local ring and let $\fp_1, \dots, \fp_m$ be the minimal primes of $R$. 
The simplicial complex $\Delta(R)$ associated to $R$ is a simplicial complex on the vertices $1, \dots, m$ such that a simplex $\{i_0, \dots, i_s\}$ is included in $\Delta(R)$ if and only if $\fp_{i_0}+\cdots+\fp_{i_s}$ is not $\m$-primary. 
\end{defn}

\begin{prop}\label{KLZex}
Let $(R,\m, k)$ be a local ring essentially of finite type over a field with residue field $k$ separably closed. 
If $\depth R \ge 3$, then 
\[
\widetilde{H}_0(\Delta(\widehat{R}); k)=\widetilde{H}_1(\Delta(\widehat{R}); k)=0, 
\]
where the $\widetilde{H}_i(\Delta(\widehat{R}); k)$ are the reduced singular homology of $\Delta(\widehat{R})$. 
\end{prop}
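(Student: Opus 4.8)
The plan is to convert the depth hypothesis into a cohomological dimension bound and then feed that bound into the Katzman--Lyubeznik--Zhang comparison between the reduced homology of $\Delta(\widehat R)$ and local cohomology. First I would write $R=S/\ba$ with $(S,\m)$ an $n$-dimensional regular local ring essentially of finite type over a field (a localization of a polynomial ring), so that $\depth S/\ba=\depth R\ge 3$. Corollary \ref{vanishing} then gives $\cd(S,\ba)\le n-3$, equivalently $H^{n}_{\ba}(S)=H^{n-1}_{\ba}(S)=H^{n-2}_{\ba}(S)=0$. Since the conclusion concerns the completion, I would pass to $\widehat S$: because $S\to\widehat S$ is faithfully flat and local cohomology commutes with flat base change, $H^{i}_{\ba\widehat S}(\widehat S)\cong H^{i}_{\ba}(S)\otimes_S\widehat S$, so these modules also vanish over $\widehat S$ and $\cd(\widehat S,\ba\widehat S)\le n-3$. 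Here $\widehat R=\widehat S/\ba\widehat S$ is complete with $\widehat S$ regular of dimension $n$ and residue field still $k$, which remains separably closed, so the hypotheses needed downstream are in place.

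Next I would run the comparison argument of \cite{KLZ}, which for a complete local ring with separably closed residue field relates the reduced homology $\widetilde H_j(\Delta(\widehat R);k)$ to the top local cohomology modules of a regular presentation, the degree $j$ being matched with the cohomological index $n-j-1$. Concretely, in this framework the vanishing $\cd(\widehat S,\ba\widehat S)\le n-j-2$ forces $\widetilde H_j(\Delta(\widehat R);k)=0$. For $j=0$ this is exactly Hartshorne's connectedness criterion---a disconnected punctured spectrum would force $H^{n-1}_{\ba}(S)\ne 0$---and needs only $\cd\le n-2$; for $j=1$ it ties $\widetilde H_1$ to the vanishing of $H^{n-2}_{\ba}(S)$ and so requires the sharper bound $\cd\le n-3$. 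Since $\depth R\ge 3$ supplies precisely $\cd(\widehat S,\ba\widehat S)\le n-3$ via the previous paragraph, both $\widetilde H_0(\Delta(\widehat R);k)$ and $\widetilde H_1(\Delta(\widehat R);k)$ vanish. The separable closedness of $k$ is essential at this step: it is what makes the identification of $\widetilde H_\bullet(\Delta)$ with the (socles of the) local cohomology modules exact, by suppressing the Galois-cohomological contributions that would otherwise intervene.

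I expect the main obstacle to be largely one of bookkeeping rather than of new ideas: keeping the homological degree $j$ of $\widetilde H_j(\Delta(\widehat R);k)$ aligned with the correct cohomological index $n-j-1$ of $H^\bullet_{\ba}$, and, more delicately, ensuring that the vanishing obtained over the essentially-of-finite-type ring $S$---where Corollary \ref{vanishing} applies---is legitimately transported to the complete regular ring $\widehat S$, which is no longer essentially of finite type but to which the comparison theorem is applied. The genuinely new input beyond \cite{KLZ} is the improved vanishing supplied by Corollary \ref{vanishing} in the depth-$3$ range; the homological machinery relating $\Delta(\widehat R)$ to local cohomology is inherited from their method, so the argument should be short once the reduction to the completion is carried out carefully.
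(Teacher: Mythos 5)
Your proposal is correct and takes essentially the same route as the paper: the paper's entire proof is the observation that the argument of \cite[Theorem 1.2]{KLZ} goes through verbatim once the Peskine--Szpiro/Varbaro vanishing input is replaced by Corollary \ref{vanishing}, which is exactly your pipeline of ``depth $\ge 3$ $\Rightarrow$ $\cd(S,\ba)\le n-3$ $\Rightarrow$ KLZ comparison kills $\widetilde{H}_0$ and $\widetilde{H}_1$.'' Your additional care in transporting the bound to the completion via faithfully flat base change (so that KLZ's machinery, which lives over the complete regular ring $\widehat{S}$ with separably closed residue field, legitimately receives the vanishing proved over the essentially-of-finite-type ring $S$) is a detail the paper leaves implicit, and you handle it correctly.
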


\begin{proof}
It follows from a very similar argument to the proof of \cite[Theorem 1.2]{KLZ}, together with Corollary \ref{vanishing}. 
\end{proof}

Finally, we note some consequences of our results combined with the key induction theorem \cite[Theorem 2.5]{HL}. 
We start with a reinterpretation of this theorem which is more convenient for our use:

\begin{thm}[Huneke-Lyubeznik] \label{keyHL}
Let $(S,\m)$ be a $d$-dimensional regular local ring containing a field and let $\ba\subset S$ be an ideal of pure height $c$. 
Let $f : \N \to \N$ be a non-decreasing function. Assume there exist integers $l'\geq l\geq c$  such that

\begin{enumerate}
\item $f(l)\geq c$, 
\item $\cd(S_{\fp}, \ba_{\fp})\leq f(l'+1)-c+1$ for all prime ideals $\fp \supset \ba$ with $\height \fp \leq l-1$, 
\item $\cd(S_{\fp}, \ba_{\fp})\leq f(\height \fp)$ for all prime ideals $\fp \supset \ba$ with $l\leq \height \fp \leq l'$, 
\item $f(r-s-1)\leq f(r)-s$ for every $r\geq l'+1$ and every $c-1 \geq s \ge 1$. 
\end{enumerate}
Then $\cd(S,\ba)\leq f(d)$ if $d\geq l$. 

\begin{proof}
We apply   \cite[Theorem 2.5]{HL} with $M=A=B=S$, $I= \ba$ and $n= f(d)+1$. 
We check all the conditions of \cite[Theorem 2.5]{HL}. First, we need to show $n>c$. 
However, since $f$ is non-decreasing and $d\geq l$,  it follows from (1) that $n-1 = f(d)\geq f(l)\geq c$.  The condition (3) allows us to  assume $d\geq l'+1$ (otherwise we take $\fp=\m$ to conclude).

Next, let $s$ be an integer such that $1\leq s\leq c-1$. 
In order to verify the two conditions (i) and (ii) in \cite[Theorem 2.5]{HL}, by \cite[Lemma 2.4]{HL}, it is enough to show the following claim: 

\begin{claim}
$\cd(S_{\fp}, \ba_{\fp})\leq n-s-1$ for all prime ideals $\fp \supset \ba$ with $\dim S/\fp \ge s+1$. 
\end{claim}

Since $d\geq l'+1$, we have that $n-s-1=f(d)-s \geq f(l'+1)-s\geq f(l'+1)-c+1$, so (2) proves the claim if $\height \fp \leq l-1$. If $\height \fp\geq l$, then by (3) and induction on $d$, we know that $\cd(S_{\fp}, \ba_{\fp})\leq f(\height \fp)$. 
However, since $\height \fp \leq d-s-1$, it follows from (4) that  
\[f(\height \fp)\leq f(d-s-1)\leq f(d)-s=n-s-1.\]
\end{proof}
\end{thm}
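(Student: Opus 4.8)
The plan is to deduce the statement from the Huneke--Lyubeznik induction machine \cite[Theorem 2.5]{HL} by choosing the data $(M,A,B,I,n)$ and verifying the four bookkeeping hypotheses exactly as they are phrased. First I would set $M=A=B=S$ and $I=\ba$, and take $n=f(d)+1$, so that the target conclusion $\cd(S,\ba)\le f(d)$ becomes the inequality $\cd(S,\ba)\le n-1$ that \cite[Theorem 2.5]{HL} outputs. Before invoking that theorem I must check its running hypothesis $n>c$: since $f$ is non-decreasing and $d\ge l$, hypothesis (1) gives $n-1=f(d)\ge f(l)\ge c$, as required. I would also immediately reduce to the case $d\ge l'+1$, because if $d\le l'$ then $\m$ is a prime of height $d$ in the range $[l,l']$ and hypothesis (3) applied to $\fp=\m$ gives $\cd(S,\ba)\le f(d)$ directly.

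The next and central step is to verify the two inductive conditions (i) and (ii) of \cite[Theorem 2.5]{HL}. Using the reformulation provided by \cite[Lemma 2.4]{HL}, both of these reduce to a single uniform cohomological bound, which I would isolate as a Claim: for every integer $s$ with $1\le s\le c-1$ and every prime $\fp\supset\ba$ with $\dim S/\fp\ge s+1$, one has $\cd(S_{\fp},\ba_{\fp})\le n-s-1$. The proof of this Claim splits according to the height of $\fp$. When $\height\fp\le l-1$, I would compute, using $d\ge l'+1$ and monotonicity of $f$,
\[
n-s-1=f(d)-s\ge f(l'+1)-s\ge f(l'+1)-c+1,
\]
and hypothesis (2) then supplies the bound. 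When $\height\fp\ge l$, I would run an induction on $d$: the localized ring $S_{\fp}$ is again regular local containing a field, $\ba_{\fp}$ is of pure height $c$, and $\height\fp<d$, so the inductive hypothesis (through (3) and the theorem applied to $S_{\fp}$) yields $\cd(S_{\fp},\ba_{\fp})\le f(\height\fp)$. Finally, the constraint $\dim S/\fp\ge s+1$ forces $\height\fp\le d-s-1$, so hypothesis (4), applied with $r=d\ge l'+1$, gives $f(\height\fp)\le f(d-s-1)\le f(d)-s=n-s-1$, completing the Claim.

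The hard part is precisely matching my chosen parameters to the literal hypotheses of \cite[Theorem 2.5]{HL} and \cite[Lemma 2.4]{HL}, and making sure the induction is well-founded: the inductive step localizes at primes of strictly smaller height, so the statement must be read as an assertion over all $d\ge l$ simultaneously, with the base case absorbed by the $d\le l'$ reduction noted above. I would take care that the numerical hypothesis (4) is quantified over exactly the right range $r\ge l'+1$ and $1\le s\le c-1$, since those are the only values of $r=d$ and $s$ that actually arise in the two displayed chains of inequalities. Once the Claim is established, conditions (i) and (ii) hold, $n>c$ is verified, and \cite[Theorem 2.5]{HL} delivers $\cd(S,\ba)\le n-1=f(d)$, which is the desired conclusion under $d\ge l$.
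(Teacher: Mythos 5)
Your proposal is correct and follows essentially the same route as the paper's own proof: the identical choice $M=A=B=S$, $I=\ba$, $n=f(d)+1$ in \cite[Theorem 2.5]{HL}, the same reduction to $d\ge l'+1$ via hypothesis (3) at $\fp=\m$, the same Claim isolated through \cite[Lemma 2.4]{HL}, and the same two-case height analysis with induction on $d$. Your added remarks on well-foundedness of the induction and on the exact quantifier range in (4) are consistent with, and slightly more explicit than, what the paper writes.
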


\begin{thm}\label{serre}
Let $S$ be an $n$-dimensional regular local ring containing a field and let $\ba \subset S$ be an ideal of height $c$. 
\begin{enumerate}
\item If $S/\ba$ satisfies Serre's condition $(S_2)$ and $\dim S/\ba\geq 1$, then 
\[\cd(S,\ba)\leq n-1-\flo{n-2}{c}.\]
\item Suppose that $S$ is essentially of finite type over a field. 
If $S/\ba$ satisfies Serre's condition $(S_3)$ and $\dim S/\ba \ge 2$, then 
\[\cd(S,\ba)\leq n-2-\flo{n-3}{c}.\]
\end{enumerate} 
\end{thm}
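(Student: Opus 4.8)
The plan is to derive both inequalities from the Huneke--Lyubeznik induction recorded in Theorem~\ref{keyHL}, by exhibiting for each part an explicit non-decreasing $f\colon\N\to\N$ and integers $l\le l'$ and then checking the four numbered hypotheses; the conclusion $\cd(S,\ba)\le f(n)$ will be exactly the stated bound once $n\ge l$, which the dimension assumption guarantees. A few preliminaries are common to both parts. Since $S/\ba$ satisfies $(S_2)$ (recall $(S_3)\Rightarrow(S_2)$) and is a quotient of the regular, hence Cohen--Macaulay, local ring $S$, it is unmixed and equidimensional, so $\ba$ has pure height $c$; thus Theorem~\ref{keyHL} applies and $\dim(S/\ba)_{\fp}=\height\fp-c$ for every prime $\fp\supset\ba$. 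Serre's conditions localize, so $S_{\fp}/\ba_{\fp}$ again satisfies $(S_k)$, whence $\depth S_{\fp}/\ba_{\fp}\ge\min(k,\height\fp-c)$. Feeding these depths into the three vanishing statements recalled in the introduction --- Hartshorne--Lichtenbaum ($\depth\ge1\Rightarrow\cd\le\height\fp-1$), the second vanishing theorem of Ogus and Peskine--Szpiro ($\depth\ge2\Rightarrow\cd\le\height\fp-2$), and Corollary~\ref{vanishing} ($\depth\ge3\Rightarrow\cd\le\height\fp-3$) --- produces, for each height $h=\height\fp$, the bound to be compared with $f(h)$. At a minimal prime ($h=c$) one has instead $\cd(S_{\fp},\ba_{\fp})=c$, since $\ba_{\fp}$ is $\fp S_{\fp}$-primary.

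For part (1) I take
\[
f(r)=r-1-\flo{r-2}{c},\qquad l=c+1,\qquad l'=2c+1.
\]
Hypothesis (3) holds on $[l,l']$: at $h=c+1$ the depth is $\ge1$, giving $\cd\le c=f(c+1)$, while for $c+2\le h\le 2c+1$ the depth is $\ge2$, giving $\cd\le h-2$, and here $\flo{h-2}{c}=1$ so $f(h)=h-2$. Hypothesis (1) is $f(c+1)=c\ge c$. For (2), any $\fp\supset\ba$ with $\height\fp\le l-1=c$ is a minimal prime, so $\cd=c$, while $f(l'+1)-c+1=f(2c+2)-c+1=c$. Finally (4) reduces, after cancellation, to $\flo{r-2}{c}-\flo{r-s-3}{c}\le1$ for $1\le s\le c-1$; the numerators differ by $s+1\le c$, so the floors differ by at most $1$. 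As $\dim S/\ba\ge1$ forces $n\ge c+1=l$, Theorem~\ref{keyHL} gives $\cd(S,\ba)\le f(n)=n-1-\flo{n-2}{c}$.

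For part (2) the extra input is Corollary~\ref{vanishing}, which is why the finite-type hypothesis is needed here. I take
\[
f(r)=r-2-\flo{r-3}{c},\qquad l=c+2,\qquad l'=2c+2.
\]
The verification is parallel: on $[l,l']$ one uses the depth-$2$ bound at $h=c+2$ (where $f(c+2)=c$) and the depth-$3$ bound from Corollary~\ref{vanishing} for $c+3\le h\le 2c+2$ (where $\flo{h-3}{c}=1$, so $f(h)=h-3$); hypothesis (1) reads $f(c+2)=c\ge c$; for (2) every prime of height $\le l-1=c+1$ satisfies $\cd\le c=f(2c+3)-c+1$; and (4) again amounts to $\flo{r-3}{c}-\flo{r-s-4}{c}\le1$, with numerators differing by $s+1\le c$. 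Since $\dim S/\ba\ge2$ gives $n\ge c+2=l$, we conclude $\cd(S,\ba)\le n-2-\flo{n-3}{c}$.

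The floor arithmetic aside, the one genuinely delicate point is the placement of the window $[l,l']$: one must start the induction at $l=c+1$ (resp.\ $c+2$) rather than at the height $c$ of $\ba$, because at a minimal prime $\cd$ equals $c$, which exceeds $f(c)$ and would break hypothesis (3). These minimal primes must instead be absorbed into the weaker hypothesis (2), and the scheme closes up precisely because $f(l'+1)-c+1=c$ on the nose. Checking that the elementary depth bounds meet $f$ exactly at the endpoints $h=l$ and $h=l'$ and that hypothesis (4) survives with the chosen $l'$ is exactly where the form of $f$ is forced; I expect this endpoint bookkeeping, rather than any deep new idea, to be the main thing to get right.
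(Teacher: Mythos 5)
Your proposal is correct and follows the paper's own argument exactly: the same reduction to Theorem~\ref{keyHL} with the identical choices $f(m)=m-1-\flo{m-2}{c}$, $l=c+1$, $l'=2c+1$ for part (1) and $f(m)=m-2-\flo{m-3}{c}$, $l=c+2$, $l'=2c+2$ for part (2), with hypothesis (3) verified via the Hartshorne--Lichtenbaum theorem, the second vanishing theorem, and Corollary~\ref{vanishing}. The only difference is that you spell out the verification of hypotheses (1), (2), and (4) and the localization of Serre's conditions, which the paper leaves implicit.
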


\begin{proof}
First note that $\ba$ is of pure height $c$ because $S/\ba$ satisfies $(S_2)$.   
For the statement $(1)$, we use Theorem \ref{keyHL} with $f(m)=m-1-\flo{m-2}{c}$, $l=c+1$ and $l'=2c+1$. 
For $(2)$, we use $f(m)=m-2-\flo{m-3}{c}$, $l=c+2$ and $l'=2c+2$. 
To verify the condition $(3)$ of Theorem \ref{keyHL}, we need to invoke Corollary \ref{vanishing} as follows. 
If $\height \fp=l=c+2$, then it follows from (1) that $\cd(S_{\fp}, \ba_{\fp}) \leq c=f(\height \fp)$. 
If $c+3 \leq \height \fp \leq l'=2c+2$, then $f(\height \fp)=\height \fp-3$.
However, since $\depth S_{\fp}/\ba S_{\fp} \geq 3$ in this case,  it follows from Corollary \ref{vanishing} that $\cd(S_{\fp}, \ba_{\fp}) \leq \height \fp -3$. 
\end{proof}

\begin{rem}
There have been many results in the literature which give similar bounds when the strict henselization of $R$ is a domain or has a small number of minimal primes. For example,  \cite[Corollary 1.2]{Lyu4} gives the same bound as that of Theorem \ref{serre} (1) under the assumption that the number of minimal primes is less than $n/c$. Of course, there are a lot of examples of ideals with good depth and many minimal primes. A very elementary example is  the complete intersection $I=(x_1\dots x_n, y_1\dots y_n) \subset k[[x_1,\cdots,x_n,y_1,\cdots, y_n]]$, which has $n^2$ minimal primes $\{(x_i,y_j)\}$. 
\end{rem}

There are many examples where the bounds in Theorem \ref{serre} are sharp. For example, one can use the Segre product of two polynomial rings of dimension $2$ and $d$ with $d\geq 3$, as explained in Example \ref{segre}. However, those examples have $c$ relatively big compared to $n$. We suspect that in general one can do a little better, for instance, as follows:  

\begin{ques}
Let $S$ be an excellent regular local ring that contains a field. Let  $n=\dim S$, and $\ba \subset S$ an ideal of height $c$. If $S/\ba$ satisfies Serre's condition $(S_2)$, then is it always true that 
\[\cd(S,\ba)\leq n- \flo{n}{c+1}-\flo{n-1}{c+1}?\]
\end{ques}

Under some extra assumption, for example if $S/\ba$ is normal, the answer is yes by \cite[Theorem 3.9]{HL}. If the answer to the above question is affirmative, then one can show that the bound is sharp for {\it most} $n$ and $c$. 
We give a class of examples in the case when $c$ is odd using square-free monomial ideals. More details will be explained in \cite{DE}.

\begin{eg}
Let $c=2l-1$, $l>1$  and suppose $n=ql$. Let $S= k[x_1,\cdots, x_n]$, $\m=(x_1, \dots, x_n)$ and $J_1$ be the monomial ideal $(x_1\cdots x_l, x_{l+1}\cdots x_{2l},\dots, x_{(q-1)l+1} \cdots x_{ql})$. Let $J_2$ be the square-free part of $J_1\m^{l-1}$. Then $J_2$ is an ideal with linear presentation and all the generators are in degree $2l-1=c$. The regularity $\reg J_2$ of $J_2$ is equal to 
\[n-q+1 = n-\flo{n}{2l} - \flo{n-1}{2l} = n-\flo{n}{c+1} - \flo{n-1}{c+1}.\]
On the other hand, if $\ba$ is the Alexander dual of $J_2$, then $S/\ba$ satisfies $(S_2)$, $\height \ba =c$ and $$\cd (S,\ba) = \pd S/\ba= \reg J_2$$ (see for example \cite{DHS}).

\end{eg}



\begin{thebibliography}{99}

\bibitem{Ar} M.~Artin, 
Algebraic approximation of structures over complete local rings, 
Inst. Hautes \'Etudes Sci. Publ. Math. (1969), no. 36, 23--58.

\bibitem{BS} C.~B{\u{a}}nic{\u{a}} and O.~St{\u{a}}n{\u{a}}{\c{s}}il{\u{a}},
\textit{Algebraic methods in the global theory of complex spaces}, Editura Academiei, Bucharest, John Wiley \& Sons, London-New York-Sydney, 1976. 

\bibitem{Bo} J.-F.~Boutot, 
\textit{Sch\'ema de Picard local}, 
Lecture Notes in Mathematics, vol. 632, Springer, Berlin, 1978.

\bibitem{BrVe} W.~Bruns and U.~Vetter, 
\textit{Determinantal rings}, 
Lecture Notes in Mathematics, vol. 1327, Springer, Berlin, 1988.

\bibitem{DE} H.~Dao and D.~Eisenbud,
{On ideals with partial linear resolution}, in preparation. 

\bibitem{DHS} H. Dao, C. Huneke, and J. Schweig, 
Bounds on the regularity and projective dimension of ideals associated to graphs, {J. Algebraic Combin.}  \textbf{38} (2013), no.1, 37--55.

\bibitem{Di} A.~Dimca, 
\textit{Singularities and topology of hypersurfaces}, 
Universitext, Springer-Verlag, New York, 1992.

\bibitem{Du} S.~P.~Dutta, 
A theorem on smoothness--Bass-Quillen, Chow groups and 
intersection multiplicity of Serre, 
Trans. Amer. Math. Soc. \textbf{352} (2000), 1635--1645.

\bibitem{Fo} R.~M.~Fossum, 
\textit{The divisor class group of a Krull domain},
Ergebnisse der Mathematik und ihrer Grenzgebiete, Band 74, Springer-Verlag, 1973.

\bibitem{GW} S.~Goto and K.~Watanabe,
{On graded rings I},
J. Math. Soc. Japan \textbf{30} (1978), 179-213. 

\bibitem{Ha2} R.~Hartshorne, 
Cohomological dimension of algebraic varieties, 
Ann. of Math. \textbf{88} (1968), 403--450.

\bibitem{Ha} R.~Hartshorne, 
On the de Rham cohomology of algebraic varieties, 
Inst. Hautes \'Etudes Sci. Publ. Math. No. \textbf{45} (1975), 5--99.

\bibitem{Ha1} R.~Hartshorne, 
\textit{Algebraic Geometry},
Graduate Text in Mathematics 52, Springer-Verlag, New York, Heidelberg, Berlin, 1977.
 

\bibitem{Ha3} R.~Hartshorne, 
Generalized divisors on Gorenstein schemes,
K-Theory \textbf{8} (1994), 287--339. 

\bibitem{Hu} C.~Huneke, Problems in local cohomology, Free Resolutions in Commutative Algebra and Algebraic Geometry Sundance 90, Res. Notes in Math., Jones and Barlett \textbf{2} (1992), 93-108.

\bibitem{HL} C.~Huneke and G.~Lyubeznik, 
On the vanishing of local cohomology modules, 
Invent. Math. \textbf{102} (1990), no. 1, 73--93. 

\bibitem{KLZ} M.~Katzman, G.~Lyubeznik and W.~Zhang, 
An extension of a theorem of Hartshorne, 
\href{http://arxiv.org/abs/1408.0858}{arXiv:1408.0858}.

\bibitem{Ko2} J.~Koll\'ar, 
Grothendieck--Lefschetz type theorems for the local Picard group, 
\href{http://arxiv.org/abs/1211.0317}{arXiv:1211.0317}.

\bibitem{Ko} J.~Koll\'ar, 
Maps between local Picard groups,
\href{http://arxiv.org/abs/1407.5108}{arXiv:1407.5108}. 

\bibitem{Lyu} G.~Lyubeznik, 
Finiteness properties of local cohomology modules, 
Invent. Math. \textbf{113} (1993), 41--55.

\bibitem{Lyu2} G.~Lyubeznik, 
$F$-modules: applications to local cohomology and $D$-modules in characteristic $p>0$, 
J. reine angew. Math. \textbf{491} (1997), 65--130. 

\bibitem{Lyu3} G.~Lyubeznik, 
On the vanishing of local cohomology in characteristic $p>0$, 
Compositio Math. \textbf{142} (2006), 207--221. 

\bibitem{Lyu4} G.~Lyubeznik, 
On some local cohomology modules,
Advances in Math. \textbf{213} (2007), 621--643.

\bibitem{Og} A.~Ogus, 
Local cohomological dimension, 
Ann. of Math. (2) \textbf{98} (1973), 327--365.

\bibitem{PS} C.~Peskine and L.~Szpiro, 
Dimension projective finie et cohomologie locale, Publ. Math. Inst.
Hautes E\'tudes Sci. \textbf{42} (1973), 47--119.

\bibitem{Va} M.~Varbaro, 
Cohomological and projective dimensions, 
Compositio Math. \textbf{149} (2013), 1203--1210. 

\end{thebibliography}
\end{document}